\documentclass[10pt]{amsart}
\usepackage{amsmath, amssymb, amsopn}
\usepackage{array}
\usepackage{amscd}
\usepackage{epsfig}
\usepackage{color}
\usepackage{comment}
\usepackage{setspace}
\usepackage{textcomp}
\usepackage{stmaryrd}
\usepackage{multirow}
\usepackage{diagbox}
\usepackage{todonotes}
\usepackage{subcaption}
\usepackage{placeins}
\usepackage{xcolor}
\usepackage{mathtools}
\usepackage{multirow}
\usepackage{xcolor}
\usepackage{longtable}
\usepackage{booktabs}
\usepackage{url}

\parindent=0.5cm
%\oddsidemargin=0cm

%% theorems
\newtheorem{theorem}{Theorem}[section]
 
\newtheorem{lemma}[theorem]{Lemma}

\newtheorem{definition}[theorem]{Definition}

%% math forms

\newcommand{\pmat}[1]{\begin{pmatrix} #1 \end{pmatrix}}
\newcommand{\case}[1]{\begin{cases} #1 \end{cases}}

\newcommand{\algn}[1]{\begin{align} #1 \end{align}}
\newcommand{\algns}[1]{\begin{align*} #1 \end{align*}}
\newcommand{\mltln}[1]{\begin{multline} #1 \end{multline}}

\newcommand{\gat}[1]{\begin{gather} #1 \end{gather}}

\newcommand{\barr}{\begin{array}}
\newcommand{\earr}{\end{array}}

\newcommand{\half} {\ensuremath{\frac{1}{2}}}
\newcommand{\mc}[1]{\mathcal{#1}}
\newcommand{\LRp}[1]{\left( #1 \right)}

\newcommand{\LRa}[1]{\left< #1 \right>}

\newcommand{\LRc}[1]{\left\{ #1 \right\}}
\newcommand{\jump}[1] {\ensuremath{[\![{#1}]\!]}}

\newcommand{\avg}[1] {\ensuremath{\left\{\!\!\left\{{#1}\right\}\!\!\right\}}}

%\newcounter{tablecell}

\newcommand{\ra}{\rightarrow}

\newcommand{\R}{{\mathbb R}}

\newcommand{\M}{{\mathbb M}}

\newcommand{\calT}{\mathcal{T}}

\renewcommand{\div}{\operatorname{div}}

\newcommand{\osc}{\operatorname{osc}}

\newcommand{\bs}{\boldsymbol}

\newcommand{\pd}{\partial}
\newcommand{\nor}[1]{\left\Vert#1\right\Vert}
\newcommand{\norw}[2]{\left\Vert#1\right\Vert_{#2}}

\newcommand{\n}{\bs{n}}

%% text
\newcommand{\tred}[1]{\textcolor{black}{#1}}

% John's stuffs

\newcommand{\Cc}{C_{\text{\rm{co}}}}

\newcommand{\Div}{\nabla \cdot}

\newcommand{\Ehbdy}{\mc{E}_h^{\pd}}
\newcommand{\EhD}{\mc{E}_h^{D}}

\newcommand{\Ehi}{\mc{E}_h^{0}}
\newcommand{\Eh}{\mc{E}_h}

\newcommand{\F}{\mc{F}}

\newcommand{\gammakap}{\gamma}

\newcommand{\inte}{\operatorname{int}}
\newcommand{\kap}{\ul{\bs{\kappa}}}

\newcommand{\Pb}{\bs{\mc{P}}}
\newcommand{\Pic}{\Pi_h^c }
\newcommand{\Pin}{\Pi_h^0 }

\newcommand{\qb}{\bs{q}}

\newcommand{\tnorm}[1]{{\left\vert\kern-0.25ex\left\vert\kern-0.25ex\left\vert#1\right\vert\kern-0.25ex\right\vert\kern-0.25ex\right\vert}}
\newcommand{\Th}{\mc{T}_h}

\newcommand{\uh}{u_h}
\newcommand{\uhc}{u_h^c}
\newcommand{\uhn}{u_h^0}

\newcommand{\ul}{\underline}

\newcommand{\vc}{v^c}
\newcommand{\vn}{v^0}

\newcommand{\VBh}{\mathbb{V}_h}
\newcommand{\Vh}{V_h}
\newcommand{\Vhc}{V_h^c}
\newcommand{\Vhn}{V_h^0}
\newcommand{\Vhnn}{V_h^{00}}
\newcommand{\Vhperp}{\VBh^{\perp}}
\newcommand{\vtiln}{\tilde{v}^0}

\newcommand{\wc}{{w^c}}
\newcommand{\wn}{{w^0}}

\newcommand{\whc}{w_h^c}
\newcommand{\whn}{w_h^0}

\newcommand{\Xc}{X_{c,\pd}}
\newcommand{\Xn}{X_{c,0}}
\newcommand{\Yc}{X_{0,\pd}}
\newcommand{\Yn}{X_{0,0}}

\newcommand{\zb}{\bs{z}}

\begin{document}

\title[Interior over-penalized enriched Galerkin methods]{Interior over-penalized enriched Galerkin methods for second order elliptic equations }
%\author{Jeonghun J. Lee\thanks{a} \and Omar Ghattas}
%\maketitle

\author{Jeonghun J. Lee} 
\address{Department of Mathematics, Baylor University, Waco, TX 76706, USA}
\email{jeonghun\_lee@baylor.edu}
\urladdr{}
\author{Omar Ghattas}
\address{Oden Institute for Computational Engineering and Sciences, University of Texas at Austin, Austin, TX 78712, USA}
\email{omar@oden.utexas.edu}
%\urladdr{http://www.mn.uio.no/math/personer/vit/rwinther/index.html}
\subjclass[2000]{Primary: 65N30}
\keywords{enriched Galerkin methods, error analysis, preconditioning}

%\headers{Interior over-stabilized enriched Galerkin methods}{Jeonghun J. Lee, Omar Ghattas}
%
%\title{Interior over-stabilized enriched Galerkin methods for second order elliptic equations  \thanks{Submitted to the editors \today. %\funding{.}
%}}
%
%\author{Jeonghun J. Lee \thanks{Department of Mathematics, Baylor University, One Bear Place \# 97328, Waco, Texas 78798, USA (\email{jeonghun\_lee@baylor.edu})}
%  \and Omar Ghattas \thanks{Oden Institute for Computational Engineering and Sciences, University of Texas at Austin, Austin, TX 78712, USA (\email{omar@oden.utexas.edu})}
%  }

\maketitle
\begin{abstract}
In this paper we propose a variant of enriched Galerkin methods for second order elliptic equations with \tred{over-penalization} of interior jump terms. 
The bilinear form with interior \tred{over-penalization} gives a non-standard norm which is different from the discrete energy norm in the classical discontinuous Galerkin methods. Nonetheless we prove that optimal a priori error estimates with the standard discrete energy norm can be obtained by combining a priori and a posteriori error analysis techniques. We also show that the interior \tred{over-penalization} is advantageous for constructing preconditioners robust to mesh refinement by analyzing spectral equivalence of bilinear forms. Numerical results are included to illustrate the convergence and preconditioning results.
\end{abstract}

%%%%%%%%%%%%%%%%%%%%%%%%%%%%%%%%%%%%%%%%%%%%%%%%%%%%%%%%%%%%%%%%%%%%%%%%%%%
\section{Introduction}
%%%%%%%%%%%%%%%%%%%%%%%%%%%%%%%%%%%%%%%%%%%%%%%%%%%%%%%%%%%%%%%%%%%%%%%%%%%

It is well known that numerical fluxes of Galerkin methods with Lagrange finite elements (CG) do not satisfy local mass conservation.
Since local mass conservation is an important physical principle in numerical simulations, 
numerical methods providing locally mass conservative numerical flux with/without postprocessing
have been intensively studied. These include post-processing of the CG methods \cite{Cockburn-Gopalakrishnan-Wang-2007,Chippada-Dawson-Martinez-Wheeler,Hughes-Engel-Mazzei-Larson}, the mixed method \cite{BDM85,RT75,Nedelec80,Nedelec86}, and a class of discontinuous Galerkin (DG) methods \cite{Arnold82,ABCM02,Wheeler73, HDG-first,Wang-Ye-weak-galerkin}), to name a few.

Enriched Galerkin (EG) methods \cite{Sun-Liu-2009} are proposed to achieve locally mass conservative numerical methods with low computational costs. 
Compared to classical DG methods for the primal formulation, EG methods have \tred{much} fewer degrees of freedom and provide a simple local post-processing for reconstruction of locally mass conservative numerical fluxes. For efficient solvers, block diagonal preconditioners based on algebraic multigrid methods have been proposed as efficient preconditioners for EG methods \cite{Lee-Lee-Wheeler-2016}. However, the preconditioning analysis in this work relies on the full elliptic regularity assumption which requires restrictive conditions on the domain geometry and material coefficients. In fact, numerical experiments show that the block preconditioners in \cite{Lee-Lee-Wheeler-2016} are not scalable with mesh refinement when the boundary condition is not a pure Neumann boundary condition.

The purpose of this paper is to develop new EG methods that retain the advantages of local mass conservation \tred{while} also providing more robust fast solvers that do not need \tred{restrictive} assumptions on domain geometry and boundary conditions. The key in the development of these new EG methods is to replace the original bilinear form with a new bilinear form that has an {\color{black}{over-penalized}} interior jump term. As such, we call them interior {\color{black}{over-penalized}} enriched Galerkin methods ({\color{black}{IOP-EG}}). The \tred{over-penalization} parameters depend on the mesh sizes, so the {\color{black}{over-penalized}} jump term and other terms in the new bilinear form do not have the same scaling property. Because of these incompatible scaling properties, the standard techniques for error analysis are not available for \tred{the} optimal error estimates \tred{of} the new EG methods. 

The first main result in this paper is an error analysis with optimal error estimates. 
To overcome the difficulty \tred{caused by the bilinear forms with} different \tred{scalings}, we use a medius analysis idea utilizing a posteriori error estimate results for our a priori error estimates. 
\tred{Through this idea} we can obtain optimal error estimates for {\color{black}{IOP-EG}} methods. 
The second main result is \tred{the} construction of scalable preconditioners for the {\color{black}{IOP-EG}} methods.
More precisely, we propose an abstract form of preconditioners based on appropriate mesh-dependent norms and carry out spectral equivalence analysis for the abstract preconditioners following the framework in \cite{Mardal-Winther-2011}. It turns out that \tred{over-penalization} is crucial for construction of scalable preconditioners via an analysis of spectral equivalence.  

The paper is organized as follows. In Section \ref{sec:prelim} we introduce notation and define the {\color{black}{IOP-EG}} methods. 
In Section \ref{sec:apriori}, we prove optimal error estimates of the {\color{black}{IOP-EG}} methods with a minimal regularity assumption using a medius error analysis. 
Finally, numerical results illustrating our theoretical results and concluding remarks will be given in Sections \ref{sec:numerical} and \ref{sec:conclusion}.

\section{Preliminaries} \label{sec:prelim}
\subsection{Notations}
For a set $D \subset \R^n$ we use $L^2(D)$ to denote the space of square integrable functions on $D$. For a finite-dimensional vector space $\Bbb{X}$, $L^2(D; \Bbb{X})$ is the space of $\Bbb{X}$-valued square-integrable functions on $D$ whose inner product is naturally defined with the inner product on $\Bbb{X}$ and the inner product on $L^2(D)$. 

Let $\Omega$ be a bounded polygonal/polyhedral domain in $\R^n$ with $n=2$ or $3$. 
For a nonnegative real number $s$, $H^s(\Omega)$ denotes the standard Sobolev spaces based on the $L^2$ norm (see \cite{Evans-book} for details). 
We use $\calT_h$ to denote a shape-regular triangulation of $\Omega$ where $h$ is the maximum diameter of triangles or tetrahedra in $\mc{T}_h$. 
We also use $\mathcal{E}_h$ to denote the set of facets in $\mc{T}_h$, i.e., the set of $(n-1)$-dimensional simplices in the triangulation determined by $\mc{T}_h$. In particular, 
$\Ehi$ and $\Ehbdy$  are the sets of interior and boundary facets of $\mc{T}_h$, respectively.

For $e \in \mathcal{E}_h$ and functions $f, g \in L^2(e)$ we define $\langle {f}, {g} \rangle_e = \int_e {f} \cdot {g} \,ds$. For a set $\Gamma$ which is a union of facets in $\Eh$, we define 
\begin{align*}
\langle f, g \rangle_{\Gamma} = \sum_{e \subset \Gamma} \langle {f}, {g} \rangle_e. 
\end{align*}
This notation is naturally extended to $\LRa{ \bs{f}, \bs{g} }_{\Gamma}$ for vector-valued functions $\bs{f}, \bs{g} \in L^2(e; \R^n)$ with the natural inner product on $\R^n$. 

For an integer $k \geq 0$ % a set $G \subset \R^n$, $\mathcal{P}_k(G)$ is the space of polynomials defined on $G$ of degree $\leq k$. 
and for each $T \in \mc{T}_h$, $\mc{P}_k(T)$ is the space of polynomials of degree $\le k$ on $T$, and $\mc{P}_k(\mc{T}_h)$ denotes the space 
\algns{
\mc{P}_k(\mc{T}_h) = 
\case{ 
\LRc{q \in H^1(\Omega) \;:\; q|_T \in \mc{P}_k(T), \; T \in \mc{T}_h } \quad \text{if } k \ge 1 \\
\LRc{q \in L^2(\Omega) \;:\; q|_T \in \mc{P}_k(T), \; T \in \mc{T}_h } \quad \text{if } k = 0  \\
} .
}
We use $X \lesssim Y$ to denote an inequality $X \le CY$ with a constant $C$ which depends only on the shape regularity of $\mc{T}_h$ and polynomial degree $k$,
and $X \sim Y$ stands for $X \lesssim Y$ and $Y \lesssim X$.

\subsection{Enriched Galerkin methods with interior \tred{over-penalization}}
In this subsection we introduce the interior {\color{black}{over-penalized}} enriched Galerkin methods.

For $\Vhc = \mc{P}_k(\mc{T}_h)$ with $k\ge 1$ and $\Vhn = \mc{P}_0(\mc{T}_h)$, we define $\Vh$ as the linear space 
\algns{
\Vh = \{ \vc + \vn \,:\, \vc \in \Vhc, \vn \in \Vhn \}. 
}
A function $v \in \Vh$ with $v = \vc + \vn$ is determined by the standard degrees of freedom of $\Vhc$ and $\Vhn$. 
However, the degrees of freedom of $\Vh$ is not the union of the two sets of degrees of freedom, of $\Vhc$ and of $\Vhn$. In fact, a piecewise polynomial function $v \in \Vh$ may have more than one expression as a sum of elements in $\Vhc$ and $\Vhn$. 
If we use $v = \vc + \vn$ with $\vc \in \Vhc$, $\vn \in \Vhn$ to denote an element in $\Vh$, the zero function can have infinitely many expressions by choosing $\vc = -\vn = C$ for any constant $C$. However, such non-unique expressions are unique up to constant addition. To see it, assume that $v = \vc_1 + \vn_1 = \vc_2 + \vn_2$. Then $\vc_1 - \vc_2 = \vn_2 - \vn_1 \in \Vhc \cap \Vhn$, so $\vc_1$ and $\vc_2$ are the same up to constant addition. So are $\vn_1$ and $\vn_2$. 

We use $\VBh$ to denote the space $\Vhc \times \Vhn$. 
Note that one element in $\phi \in \VBh$ uniquely determines $\phi_c \in \Vhc$ and $\phi_0  \in \Vhn$. 
Let $\VBh^{ker}$ be the one dimensional subspace $\VBh^{ker}:= \{ \phi \in \VBh\,:\, \phi_c = - \phi_0 \}$ in $\VBh$.
If we denote the quotient space $\VBh / \VBh^{ker}$ by $\Vhperp$, there is a one-to-one correspondence between $\Vh$ and $\Vhperp$. 

To introduce the bilinear form for the enriched Galerkin method, we define the jump and average operators of functions which have well-defined traces on edges/faces in the context of classical discontinuous Galerkin methods. 
For $e \in \mc{E}_h^{\pd}$
\algns{
	\jump{q}|_e = q|_e \n , \quad \avg{q}|_e = q|_e, \qquad \jump{\qb}|_e = \qb|_e \cdot \n, \quad \avg{\qb}|_e = \qb|_e 
}
where $q$ and $\qb$ are $\R$- and $\R^n$-valued functions such that their traces on $e$ are well-defined, and $\n$ is the outward unit normal vector field on $e$. For $e \in \mc{E}_h^0$, let $T_+$ and $T_-$ be the two elements sharing $e$ as $e = \pd T_+ \cap \pd T_-$. If $q$ is a scalar function on $T_+ \cup T_-$, then we use $q^{\pm}$ and $\nabla q^{\pm}$ to denote the restrictions of $q$ and $\nabla q$ on $T_+$ and $T_-$. We use $\n^{\pm}$ to denote the unit outward normal vector fields of $T_\pm$ and we again omit the restriction $|_e$ if it is clear in context. Then the jumps and the averages of $q$ and $\qb$ on $e \in \Ehi$ are defined by 
\algns{
\jump{q}|_e &= q^+ \n^+ + q^- \n^-, & \avg{q}|_e &= \half (q^+ + q^- ), \\
\jump{\qb}|_e &= \qb^+ \cdot \n^+ + \qb^- \cdot \n^- , & \avg{\qb}|_e &= \half (\qb^+ + \qb^- ) .
}
If $e \in \mc{E}_h$ is clear in context, we will use $\jump{q}$ instead of $\jump{q}|_e$ for the jump of $q$ on $e \in \mc{E}_h$. The same simplification will apply to other quantities.

We consider a model second order elliptic equation 
\gat{
\label{eq:poisson-eq} - \Delta u = f \qquad  \text{ in } \Omega, \\
\label{eq:poisson-bc} u|_{\Gamma_D} = u_D \text{ on } \Gamma_D, \quad \nabla u \cdot \n |_{\Gamma_N} = u_N \text{ on } \Gamma_N 
}
{\color{black}{where $\Gamma_D$, $\Gamma_N$ are disjoint open subsets of $\pd \Omega$, the boundary of $\Omega$, such that $\overline{\Gamma_D} \cup \overline{\Gamma_N} = \pd \Omega$.}}

Let $\gamma >0$ be a sufficiently large positive constant. %and $\gamma_0$ be a piecewise constant function on $\Ehi$ such that $\gamma_0 \ge \gamma_\pd$.
We use $\EhD$ to denote the set $\{ e \in \Ehbdy \,:\, e \subset \Gamma_D \}$. For {\color{black}{$V :=H^1(\Omega)$}},
%
%\algns{
%V := \{ v \in H^1(\Omega) \,:\, \nabla v \cdot \n |_{\Gamma_N} \in L^2(\Gamma_N) \} ,
%}
let $a : V \times \Vh \ra \R$ be
\algn{ \label{eq:a}
a (v, w) &= \LRp{ \nabla v, \nabla w} + \LRa{ \gammakap h_e^{-1 } {v}, {w} }_{\EhD} 
}
where 
\algns{
\LRa{ \gammakap h_e^{-1 } {v}, {w} }_{\EhD}  := \sum_{e \in \EhD} \LRa{ \gammakap h_e^{-1} v, w}_e 
}
with $h_e$, the diameter of $e \in \Eh$. 
We also define the bilinear form $a_h$ on $\Vh \times \Vh$ as 
\algn{ \label{eq:ah}
a_h (v, w) &= \LRp{ \nabla v, \nabla w} - \LRp{ \LRa{ \avg{\nabla v}, \jump{w}}_{\Ehi\cup\EhD} + \LRa{ \jump{v}, \avg{\nabla w}}_{\Ehi\cup\EhD} } \\
\notag &\quad + \LRa{ \gammakap h_e^{-1-\alpha} \jump{v}, \jump{w} }_{\Ehi} + \LRa{ \gammakap h_e^{-1 } \jump{v}, \jump{w} }_{\EhD} 
}
for $\alpha \ge 0$. Here $\nabla$ for $v \in \Vh$ is the element-wise gradient operator. If $\alpha = 0$, this is the bilinear form of the symmetric interior penalty discontinuous Galerkin method (SIPG), which is used in the enriched Galerkin (EG) method in \cite{Lee-Lee-Wheeler-2016}.
In this paper we are interested in the cases $\alpha \ge 1$ such that the interior penalization terms are overly {\color{black}{penalized}}. We will call these methods interior {\color{black}{over-penalized}} enriched Galerkin methods ({\color{black}{IOP-EG}}). For example, if $\alpha = 1$, then $a_h$ is 
\algns{ 
a_h (v, w) &= \LRp{ \nabla v, \nabla w} - \LRp{ \LRa{ \avg{\nabla v}, \jump{w}}_{\Ehi\cup\EhD} + \LRa{ \jump{v}, \avg{\nabla w}}_{\Ehi\cup\EhD} } \\
\notag &\quad + \LRa{ \gammakap h_e^{-2} \jump{v}, \jump{w} }_{\Ehi} + \LRa{ \gammakap h_e^{-1} \jump{v}, \jump{w} }_{\EhD}. 
}

In {\color{black}{IOP-EG}} methods we seek $\uh \in \Vh$ such that 
\algn{ \label{eq:eg-eq}
a_h(\uh, v) = \F(v) \qquad \forall v \in \Vh 
}
for $a_h$ in \eqref{eq:ah} with
\algn{ \label{eq:F}
	\F (v) := (f, v) + \LRa{u_N, v }_{\Gamma_N} + \LRa{u_D, \n \cdot \nabla v }_{\Gamma_D} + \LRa{ \gammakap h_e^{-1} u_D, v}_{\Gamma_D} .
}
To show the stability of \eqref{eq:eg-eq}, let us define a norm on $\Vh$ associated with $a_h$ as 
\algn{ \label{eq:ah-norm}
\norw{ v }{a_h}^2 
&:= \LRp{ \nabla v, \nabla v} + \LRa{ \gammakap h_e^{-1-\alpha} \jump{v}, \jump{v} }_{\Ehi} + \LRa{ \gammakap h_e^{-1} \jump{v}, \jump{v} }_{\EhD}  .
}
For later use, we also define 
\algn{
  \norw{ v }{a_h, 0}^2 &:= \LRp{ \nabla v, \nabla v} + \LRa{ \gammakap h_e^{-1-\alpha} \jump{v}, \jump{v} }_{\Ehi}, & \norw{ v }{a_h, \pd}^2 &:=  \LRa{ \gammakap h_e^{-1} \jump{v}, \jump{v} }_{\EhD}.
}
By the inverse trace inequality and the standard argument in DG methods, it is not difficult to show
\algns{
| a_h(v, w) | \lesssim \norw{v}{a_h} \norw{w}{a_h}
}
for $v, w \in \Vh$. We can obtain that 
\algn{ \label{eq:cross-bound}
| \LRa{ \avg{\nabla v}, \jump{v}}_{\Ehi\cup\EhD} | \le \frac 12 \LRp{ \nabla v, \nabla v}^{\half} \LRa{ \gammakap h_e^{-1} \jump{v}, \jump{v} }_{\Ehi \cup\EhD}^{\half} 
}
for sufficiently large $\gamma$. 
%depending on the constant of inverse trace inequality and $\gamma_{\pd}$.
Since $h \le 1$, if $\gamma$ is sufficiently large, then we can use \eqref{eq:cross-bound} to derive a coercivity inequality
\algn{ \label{eq:coercive}
\Cc \norw{v}{a_h}^2 \le a_h(v, v) 
}
with $\Cc >0$ independent of $0 < h \le 1$. Therefore \eqref{eq:eg-eq} is stable.

\section{The a priori error estimates}
\label{sec:apriori}
In this section we prove the a priori error analysis of the {\color{black}{IOP-EG}} methods.
For the analysis we adopt the idea of \cite{Gudi-2010} utilizing estimates of the a posteriori error analysis. 
As a consequence, we prove optimal a priori error estimates with minimal regularity assumption of exact solutions.

We recall efficiency estimates of the a posteriori error analysis in Nitsche's method \cite{Nitsche,Luthen-Juntunen-Stenberg-15}. 
We take into account only the data oscillation of $f$ in the discussion below for simplicity. 
For detailed discussion with data oscillations of boundary conditions, we refer to \cite{Luthen-Juntunen-Stenberg-15}.
For $v \in \Vhc$ the local efficiency estimates 
\algn{
\label{eq:aposteriori-1} h_T^2 \nor{f + \Delta v}_{0,T}^2 &\lesssim (\nabla (u - v), \nabla(u-v))_{0,T}^2 + (\osc_T(f))^2 ,\\
\label{eq:aposteriori-2} h_e \nor{\jump{\nabla v} }_{0,e}^2 &\lesssim \sum_{T \in \mc{T}_h, e \subset T} \LRp{(\nabla (u - v), \nabla(u-v))_{0,T}^2 + (\osc_T(f))^2 }
}
are proved in \cite{Luthen-Juntunen-Stenberg-15} with 
\algns{
\osc_T(f) &:= h_T \nor{f - f_h }_{0,T} , & \quad 
\osc(f) &:= \LRp{ \sum_{T \in \mc{T}_h} (\osc_T(f) )^2 }^\half 
}
where $f_h$ is the $L^2$ projection of $f$ into $\Vhc$.

Here we give some definitions and results which are necessary in our a priori error analysis.
We will use $\Vhnn$ to denote the space of mean-value zero piecewise constant functions, i.e., 
\algns{ 
\Vhnn := \{ v \in \Vhn \; : \; \int_\Omega v \,{\rm d} x = 0 \} .
}
Suppose that $D$ is a union of (closed) simplices in $\mc{T}_h$ and $\inte D$ is the interior of ${D}$. 
A discrete seminorm $| v |_{H_h^1(D)}$ for element-wise $H^1$ function $v$ is defined as  
\algn{
| v |_{H_h^1(D)}^2 = \| \nabla v \|_{L^2(D)}^2 + \sum_{e \in \Eh^0, e \subset \inte D} h_e^{-1} \| \jump{v} \|_{0,e}^2 .
}
If $D = \overline{\Omega}$, we simply use $H_h^1$ instead of $H_h^1(\overline{\Omega})$.

For a vertex $z$ in the triangulation $\mc{T}_h$, $T \in \mc{T}_h$, $e \in \Eh$, we define
\algn{ \label{eq:macroelement}
	M_z = \bigcup_{z {\color{black}{\in}} T} \{ T \in \mc{T}_h \,:\, z \in T\}, \quad M_T = \bigcup_{z {\color{black}{\in}}  T} M_z, \quad M_e = \bigcup_{T \in \mc{T}_h, e \subset \pd T} M_T .
}
Geometrically, these are the union of simplices which become proper neighborhoods whose interiors contain $z$, $T$, and $e$, respectively. 
For $v \in \Vhn$ it is known that there is a linear interpolation operator $I_h : \Vhn \ra \Vhc $ such that,   
\algn{
\label{eq:Ih-estm1} \norw{v - I_h v}{L^2(T)} &\lesssim h_T |v|_{H_h^1(M_T)} & & \forall T \in \Th, \\
\label{eq:Ih-estm2} \norw{v - I_h v}{L^2(e)} &\lesssim h_e^{\half} |v|_{H_h^1(M_e)} & & \forall e \in \Ehi \cup \EhD , \\
\label{eq:Ih-estm3} \norw{\nabla I_h v}{L^2(T)} &\lesssim |v|_{H_h^1(M_T)} & & \forall T \in \Th 
}
hold, where the implicit constants in these estimates are independent of the mesh sizes (see \tred{\cite[Theorem~3.1]{Buffa-Ortner-2009}}).

We define $\Th^D$ and $\mc{E}_h(\Th^D)$ as 
\algn{
\Th^D &:= \{ T \in \Th \,:\, \pd T \cap \pd \Omega \subset \Gamma_D \}, \\
\mc{E}_h(\Th^D) &:= \{ e \in \mc{E}_h \,:\, e \subset \pd T, \text{ for some } T \in \mc{T}_h^D \}. 
}
\begin{lemma} \label{lemma:V0-bdy}
For $\vn \in \Vhnn$ it holds that 
\algn{ \label{eq:vn-bdy-vn-int}
\norw{\vn}{a_h, \pd} \le C \max_{e \in \EhD} \{ h_e^{-\half} \} \max_{e \in \Ehi} \{ h_e^{\alpha/2} \} \norw{\vn}{a_h, 0} 
}
with $C$ depending on the shape regularity, the constants of the trace theorem and the discrete Poincar\'e inequality.
\end{lemma}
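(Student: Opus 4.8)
The plan is to route the estimate through the broken seminorm $|\cdot|_{H_h^1}$ and the conforming interpolant $I_h\colon\Vhn\to\Vhc$ of \eqref{eq:Ih-estm1}--\eqref{eq:Ih-estm3}. Since $\vn$ is piecewise constant its element-wise gradient vanishes, so both $\norw{\vn}{a_h,0}^2$ and $|\vn|_{H_h^1}^2$ reduce to interior-jump sums, and writing $h_e^{-1-\alpha}=h_e^{-\alpha}h_e^{-1}$ gives
\algns{
\norw{\vn}{a_h,0}^2 \;=\; \gammakap\sum_{e\in\Ehi}h_e^{-\alpha}\,h_e^{-1}\norw{\jump{\vn}}{0,e}^2 \;\ge\; \gammakap\Big(\max_{e\in\Ehi}h_e\Big)^{-\alpha}\,|\vn|_{H_h^1}^2 .
}
On the other side, $\jump{\vn}|_e=\vn|_e\,\n$ on a Dirichlet face, so $\norw{\vn}{a_h,\pd}^2=\gammakap\sum_{e\in\EhD}h_e^{-1}\norw{\vn}{0,e}^2\le \gammakap\big(\max_{e\in\EhD}h_e^{-1}\big)\norw{\vn}{L^2(\Gamma_D)}^2$. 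Comparing the two, the lemma follows once I establish the \emph{mesh-independent} trace bound $\norw{\vn}{L^2(\Gamma_D)}\lesssim|\vn|_{H_h^1}$: the two displayed powers of $h_e$ then appear exactly as claimed, and the factors $\gammakap$ and $\gammakap^{-1}$ cancel, leaving a constant of the asserted type.

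To prove $\norw{\vn}{L^2(\Gamma_D)}\lesssim|\vn|_{H_h^1}$ I split $\vn=(\vn-I_h\vn)+I_h\vn$. For the nonconforming remainder, \eqref{eq:Ih-estm2} together with $h_e\le 1$ and the bounded overlap of the macro-patches $M_e$ gives $\norw{\vn-I_h\vn}{L^2(\Gamma_D)}^2\lesssim\sum_{e\in\EhD}h_e\,|\vn|_{H_h^1(M_e)}^2\lesssim|\vn|_{H_h^1}^2$. For the conforming part, $I_h\vn\in\Vhc\subset H^1(\Omega)$, so the continuous trace theorem yields $\norw{I_h\vn}{L^2(\Gamma_D)}\le\norw{I_h\vn}{L^2(\pd\Omega)}\lesssim\norw{I_h\vn}{H^1(\Omega)}$; then \eqref{eq:Ih-estm3} and bounded overlap bound $\norw{\nabla I_h\vn}{L^2(\Omega)}\lesssim|\vn|_{H_h^1}$, while \eqref{eq:Ih-estm1} (again using $h_T\le 1$) gives $\norw{I_h\vn}{L^2(\Omega)}\le\norw{\vn}{L^2(\Omega)}+\norw{\vn-I_h\vn}{L^2(\Omega)}\lesssim\norw{\vn}{L^2(\Omega)}+|\vn|_{H_h^1}$. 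Finally $\vn\in\Vhnn$ has zero mean, so the discrete Poincar\'e inequality gives $\norw{\vn}{L^2(\Omega)}\lesssim|\vn|_{H_h^1}$. Collecting these estimates proves the trace bound, and hence the lemma.

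The one step that is not pure bookkeeping is the interplay between $I_h$ and the trace theorem — converting the broken quantity $|\vn|_{H_h^1}$ into a genuine $H^1(\Omega)$ norm of the conforming function $I_h\vn$, at which point the standard trace inequality applies; this is also where the mean-zero hypothesis enters, via the discrete Poincar\'e inequality. The main care needed in the computation is with the powers of $h_e$: one must extract $\max_{e\in\EhD}h_e^{-1}$ only once, from the $\norw{\cdot}{a_h,\pd}$ side, which is precisely why the argument passes through $|\vn|_{H_h^1}$ and the \emph{global} trace inequality rather than an edge-by-edge scaled trace estimate — the latter would cost a spurious extra factor of $\max_{e\in\EhD}h_e^{-1/2}$.
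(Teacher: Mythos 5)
Your proof is correct and follows essentially the same route as the paper: both pass through the interpolant $I_h$ with the estimates \eqref{eq:Ih-estm1}--\eqref{eq:Ih-estm3}, the continuous trace theorem applied to the conforming part $I_h \vn$, the discrete Poincar\'e inequality for the mean-zero $\vn$, and the conversion $|\vn|_{H_h^1} \le \gammakap^{-1/2}\max_{e\in\Ehi}\{h_e^{\alpha/2}\}\norw{\vn}{a_h,0}$. The only (harmless) difference is organizational: you extract $\max_{e\in\EhD}\{h_e^{-1}\}$ once from the whole boundary sum and prove the global bound $\norw{\vn}{L^2(\Gamma_D)}\lesssim|\vn|_{H_h^1}$, whereas the paper splits facet-by-facet (via the rescaling $z=h_e^{-1/2}\vn$), keeping the remainder term free of that factor; both yield exactly the stated estimate.
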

\begin{proof}
Suppose that $T \in \Th^D$ and $e \in \EhD$ with $e \subset \pd T$, are fixed. If we set $z \in \Vhnn$ as $z = h_e^{-1/2} \vn$, then, by \eqref{eq:Ih-estm2}, 
\algns{
\LRa{ h_e^{-1} \vn, \vn}_e = \norw{z}{L^2(e)}^2 &\le 2 \norw{I_h z}{L^2(e)}^2 + C h_e | z |_{H_h^1 (M_e)}^2 \\
&\le 2 h_e^{-1} \norw{I_h \vn}{L^2(e)}^2 + C  |\vn|_{H_h^1 (M_e)}^2  .
}
Applying a similar argument for every such pair $(T,e)$, $T \in \mc{T}_h^D$, % and the Cauchy--Schwarz inequality give
\algn{ \label{eq:vn-ineq}
\norw{\vn}{a_h, \pd}^2 \le 2 \max_{e \in \EhD} \{ h_e^{-1} \} \gamma \norw{I_h \vn}{L^2(\Gamma_D)}^2 + C \gammakap  |\vn|_{H_h^1}^2 .
%&= \gamma_{\pd} \max_{e \in \EhD} \{ {\deltakap^2|_e} h_e^{-2} \} \norw{\vn}{L^2(\Omega)}^2 .
}
By the trace theorem and the triangle inequality, 
we have 
\algns{
\norw{I_h \vn}{L^2(\Gamma_D)} &\lesssim \norw{I_h \vn}{L^2(\Omega)} + \norw{\nabla I_h \vn}{L^2(\Omega)} \\
&\le \norw{I_h \vn - \vn }{L^2(\Omega)} + \norw{\vn }{L^2(\Omega)} + \norw{\nabla I_h \vn}{L^2(\Omega)} .
}
By a discrete Poincar\'e inequality (cf. \cite[Remark~1.1]{Brenner-poincare}), $\norw{\vn }{L^2(\Omega)} \lesssim | \vn |_{H_h^1}$ because $\vn \in \Vhnn$. Applying this discrete Poincar\'{e} inequality, \eqref{eq:Ih-estm1}, and \eqref{eq:Ih-estm3}, to the above inequality, we get 
\algns{
\norw{I_h \vn}{L^2(\Gamma_D)} \lesssim | \vn |_{H_h^1} .
}
From this, \eqref{eq:vn-ineq}, and 
$| \vn |_{H_h^1} \le \max_{e \in \mc{E}_h^0} \gammakap^{-1/2} h_e^{\alpha/2} \norw{\vn}{a_h, 0}$,
we can obtain 
\algns{
\norw{\vn}{a_h, \pd} \le C \max_{e \in \EhD} \{ h_e^{-\half} \} \max_{e \in \Ehi} \{ h_e^{\alpha/2} \} \norw{\vn}{a_h, 0} .
}
\end{proof}

In the theorem below, we assume that $\max_{e \in \EhD} \{ h_e^{-\half} \} \max_{e \in \Ehi} \{ h_e^{\alpha/2} \}$ is bounded by $Ch^{(\alpha-1)/2}$ with $C>0$ independent of the mesh sizes.
This condition is fulfilled, for example, if $\mathcal{T}_h$ is quasi-uniform.
Under this assumption, we prove the a priori error estimates for solutions $u \in H^s(\Omega)$, $s \ge 1$, of \eqref{eq:poisson-eq}--\eqref{eq:poisson-bc}.
\begin{theorem}
Suppose that $u$ and $\uh$ are the solutions of \eqref{eq:poisson-eq}--\eqref{eq:poisson-bc} and \eqref{eq:eg-eq}. Assume that $\alpha \ge 1$ and 
\algns{
\max_{e \in \EhD} \{ h_e^{-\half} \} \max_{e \in \Ehi} \{ h_e^{\alpha/2} \} \le C h^{(\alpha-1)/2} 
}
with $C>0$ independent of the mesh sizes. Then, 
\algn{ \label{eq:u-ah-estm}
\norw{u - \uh}{a_h} \le C (\inf_{v \in \Vhc} \norw{u - v}{1} + \osc(f) ) 
}
with $C>0$ independent of the mesh sizes. In addition, the interior jump terms satisfy
\algn{ \label{eq:u0-jump-estm}
| \uhn |_{H_h^1} \lesssim h^{\alpha} \LRp{ \inf_{v \in \Vhc} \norw{u - v}{1} + \osc(f) } .
}
{\color{black}{We remark that \eqref{eq:u-ah-estm} can be replaced by $\norw{u - \uh}{a_h} \le C h^s \norw{u}{s+1}$, a more common form of error estimate, if $1 \le s \le k+1$ because $\norw{f}{s-1} \le C \norw{u}{s+1}$.}}
\end{theorem}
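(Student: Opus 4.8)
The plan is to follow the medius-analysis strategy of Gudi, combining Galerkin quasi-optimality in a discrete energy norm with the a posteriori efficiency estimates \eqref{eq:aposteriori-1}--\eqref{eq:aposteriori-2}. First I would fix an arbitrary $v \in \Vhc$, set $w := \uh - v \in \Vh$, and decompose $w = w^c + w^0$ with $w^c \in \Vhc$ and $w^0 \in \Vhn$; by the quotient-space discussion in Section \ref{sec:prelim} I may normalize so that $w^0 \in \Vhnn$. Using coercivity \eqref{eq:coercive} I write $\Cc \norw{w}{a_h}^2 \le a_h(w,w) = a_h(\uh - v, w)$. The key is to insert the exact solution: since $u \in H^s(\Omega) \subset H^1(\Omega)$ satisfies $a(u, \cdot) = \F(\cdot)$ on $\Vh$ — here one checks the standard consistency identity, integrating $-\Delta u = f$ against $w$ element-wise and using that $\jump{\nabla u} = 0$ across interior facets and the Dirichlet data on $\EhD$ — we get $a_h(\uh, w) = \F(w) = a_h^{\text{consistent}}(u, w)$, so $a_h(\uh - v, w)$ becomes a sum of (i) the $H^1$-type term $(\nabla(u-v), \nabla w)$, (ii) the consistency/flux terms $\LRa{\avg{\nabla v}, \jump{w}}$ and $\LRa{\jump{v}, \avg{\nabla w}}$ (note $\jump{u}=0$), and (iii) the penalty terms $\LRa{\gamma h_e^{-1-\alpha}\jump{v-u}, \jump{w}}_{\Ehi}$ and the $\EhD$ analogue, where again $\jump{u}=0$ and $u|_{\Gamma_D}=u_D$ so these reduce to $\jump{v}$ and $v|_{\Gamma_D}-u_D$.

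Next I would estimate each piece. Terms (i) and the $\EhD$ penalty/flux contributions are controlled directly by $\norw{u-v}{1}\norw{w}{a_h}$ via Cauchy–Schwarz and the inverse trace inequality, exactly as in SIPG, because on boundary facets the penalty weight is the standard $h_e^{-1}$. The genuinely new difficulty is the interior penalty term $\LRa{\gamma h_e^{-1-\alpha}\jump{v}, \jump{w^0}}_{\Ehi}$ (the $w^c$ part has $\jump{w^c}=0$): naive Cauchy–Schwarz costs a factor $h_e^{-\alpha/2}$ that cannot be absorbed. Here I would instead bound $\LRa{\gamma h_e^{-1-\alpha}\jump{v},\jump{w^0}}_{\Ehi} \le \norw{v}{a_h,0}\cdot\big(\sum_{e\in\Ehi}\gamma h_e^{-1-\alpha}\norw{\jump{v}}{0,e}^2\big)^{1/2}$ and then use that $v \in \Vhc$ is \emph{conforming}, so $\jump{v}=0$ on $\Ehi$ — wait, that kills it entirely. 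The subtlety is rather in the cross terms involving $\jump{w^0}$: I would use the interpolant $I_h$ from \eqref{eq:Ih-estm1}--\eqref{eq:Ih-estm3} to replace $w^0$ by $I_h w^0 \in \Vhc$ modulo a remainder measured in $|w^0|_{H_h^1}$, and crucially invoke Lemma \ref{lemma:V0-bdy} together with the quasi-uniformity hypothesis $\max_{e\in\EhD} h_e^{-1/2}\max_{e\in\Ehi} h_e^{\alpha/2} \le C h^{(\alpha-1)/2}$ to trade the oversized interior norm $|w^0|_{H_h^1}$ (which satisfies $|w^0|_{H_h^1} \le C h^{\alpha/2}\norw{w^0}{a_h,0} \le C h^{\alpha/2}\norw{w}{a_h}$, since $\gamma h_e^{-1-\alpha}\ge \gamma h^{-\alpha} h_e^{-1}$) against the flux terms. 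The a posteriori estimates \eqref{eq:aposteriori-1}--\eqref{eq:aposteriori-2} enter when bounding the residual-type quantity $\sum_T h_T^2\norw{f+\Delta v}{0,T}^2 + \sum_e h_e\norw{\jump{\nabla v}}{0,e}^2$ that arises after integrating by parts back on the terms paired against $I_h w^0$; these are dominated by $\norw{u-v}{1}^2 + \osc(f)^2$.

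Collecting, I obtain $\norw{w}{a_h}^2 \lesssim (\norw{u-v}{1}+\osc(f))\norw{w}{a_h} + (\text{absorbable terms})$, hence $\norw{w}{a_h} \lesssim \norw{u-v}{1}+\osc(f)$, and a triangle inequality $\norw{u-\uh}{a_h}\le\norw{u-v}{a_h}+\norw{w}{a_h}$ together with $\norw{u-v}{a_h}=\norw{u-v}{1}$ for $v\in\Vhc$ (all jumps of $u$ and of $v$ vanish on $\Ehi$, and on $\EhD$ we use the trace inequality) gives \eqref{eq:u-ah-estm} after taking the infimum over $v$. For the sharpened bound \eqref{eq:u0-jump-estm}, I would revisit the coercivity step keeping the interior penalty term explicit: $\gamma\min_e h_e^{-1-\alpha}\norw{\jump{\uhn}}{0,\Ehi}^2 \le \norw{\uh}{a_h,0}^2 \lesssim \norw{u-\uh}{a_h}^2$ is too crude, so instead I repeat the argument with $w = \uh - v$ but now extract $|w^0|_{H_h^1}^2 \ge h^{\alpha}\big(\sum_{e\in\Ehi} h_e^{-1}\norw{\jump{\uhn}}{0,e}^2 + \dots\big)$ — more precisely, $|\uhn|_{H_h^1}^2 \le C\, h^{\alpha}\norw{\uh - v}{a_h,0}^2 + (\text{conforming part})$, and since $v$ and the conforming component of $\uh$ carry no interior jumps, $|\uhn|_{H_h^1} \lesssim h^{\alpha/2}\norw{u-\uh}{a_h}$; squaring and feeding in \eqref{eq:u-ah-estm} would give $h^{\alpha}$ — the factor $h^\alpha$ rather than $h^{\alpha/2}$ suggests one must use the improved estimate bootstrapped once more, i.e., plug the first bound back into the residual terms. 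The main obstacle, as flagged in the introduction, is precisely this mismatch of scalings: making the interpolation-plus-a-posteriori detour quantitatively tight enough that the $h_e^{-\alpha/2}$ losses are exactly compensated by the extra powers $h^{\alpha/2}$ gained from the over-penalized norm, uniformly in $h$, which is where the quasi-uniformity assumption and Lemma \ref{lemma:V0-bdy} are indispensable.
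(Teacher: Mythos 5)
Your plan breaks down at its central step. You propose to insert the exact solution through the classical consistency identity, ``integrating $-\Delta u = f$ against $w$ element-wise and using that $\jump{\nabla u}=0$ across interior facets,'' so that $a_h(\uh,w)=\F(w)$ can be rewritten with $u$ in place of a discrete function. That identity requires facet traces of $\nabla u$, i.e.\ essentially $u\in H^{3/2+\epsilon}(\Omega)$, and is precisely what the minimal-regularity hypothesis $u\in H^s(\Omega)$, $s\ge 1$, rules out; avoiding it is the entire point of the medius analysis. The paper never forms $a_h(u,\cdot)$: it introduces the Nitsche projection $\Pic u$ defined by \eqref{eq:Pic-proj} and a piecewise-constant, mean-zero correction $\Pin u$ defined by \eqref{eq:Pi0-proj}, integrates by parts only on the \emph{discrete} function $\Pic u$, and controls the resulting residuals $f+\Delta \Pic u$ and $\jump{\nabla \Pic u}$ through the efficiency estimates \eqref{eq:aposteriori-1}--\eqref{eq:aposteriori-2}, with the test function $\vn - I_h\vn$ measured by \eqref{eq:Ih-estm1}--\eqref{eq:Ih-estm2}. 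Your later remarks about ``integrating by parts back on the terms paired against $I_h w^0$'' gesture toward this, but the argument as written hinges on the invalid consistency step, and the passage where you bound the interior penalty term and then observe ``wait, that kills it entirely'' signals that the actual estimate has not been carried out. A smaller but real error: $\norw{u-v}{a_h}=\norw{u-v}{1}$ is false for arbitrary $v\in\Vhc$ because of the $h_e^{-1}$-weighted Dirichlet penalty; the paper sidesteps this by invoking the known a priori bound for the Nitsche solution $\Pic u$.

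Even if the quasi-optimality part were repaired, your route does not reach \eqref{eq:u0-jump-estm}. Your own accounting gives $|\uhn|_{H_h^1}\lesssim h^{\alpha/2}\norw{u-\uh}{a_h}\lesssim h^{\alpha/2}(\inf_{v\in\Vhc}\norw{u-v}{1}+\osc(f))$, which is short by a factor $h^{\alpha/2}$, and the ``bootstrap'' you appeal to is never specified. The paper obtains the full $h^{\alpha}$ because the key bound \eqref{eq:Pi0u-v-estm} is \emph{linear} in $|\vn|_{H_h^1}$: combined with $|\vn|_{H_h^1}^2\lesssim h^{\alpha}\norw{\vn}{a_h,0}^2$ and the choice $\vn=\Pin u$ it yields $\norw{\Pin u}{a_h}\lesssim h^{\alpha/2}(\cdot)$ and $|\Pin u|_{H_h^1}\lesssim h^{\alpha}(\cdot)$; then, using Lemma~\ref{lemma:V0-bdy} and the quasi-uniformity hypothesis, the error equation gives $\norw{\uh-\Pi_h u}{a_h}\lesssim \norw{\Pin u}{a_h}\lesssim h^{\alpha/2}(\cdot)$, so that $|\uhn-\Pi_h^0 u|_{H_h^1}\lesssim h^{\alpha/2}\norw{\uh-\Pi_h u}{a_h,0}$ collects two factors of $h^{\alpha/2}$. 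The missing idea in your proposal is exactly this: a comparison function with a built-in piecewise-constant component ($\Pin u$) that is itself provably $O(h^{\alpha/2})$ small in $\norw{\cdot}{a_h}$; a single arbitrary $v\in\Vhc$, which has no constant part and no smallness mechanism for the constant part of $\uh$, cannot produce the improved interior-jump rate.
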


\begin{proof}
 
We define $\Pic : H^1 (\Omega) \ra \Vhc$ by the linear map seeking $\whc \in \Vhc$ such that 
\algn{ \label{eq:Pic-proj}
a(\whc, v) = \F(v) \qquad \forall v \in \Vhc .
}
Further, we define $\Pin : (\Vhnn)^* \ra \Vhnn$ by the linear map seeking the solution $\whn \in \Vhnn$ of 
\algn{ \label{eq:Pi0-proj}
a_h(\whn, v) = a_h(\Pic u, v)  - \F(v) \qquad \forall v \in \Vhnn .
}

We first claim that for $\vn \in \Vhnn$
\algn{ \label{eq:Pi0u-v-estm}
|a_h(\Pin u, \vn)| \lesssim \LRp{ \inf_{w \in \Vhc} \norw{u - w}{1} + \osc(f) } |\vn|_{H_h^1} .
}
To prove it, we use \eqref{eq:Pi0-proj}, the Galerkin orthogonality, and the integration by parts to get
\algns{
a_h(\Pin u, \vn) 
&= a_h(\Pic u, \vn - I_h \vn)  - \F(\vn - I_h \vn) \\
&= - \sum_{T \in \Th} (f + \Div (\nabla \Pic u), \vn - I_h \vn) \\
&\quad - \sum_{e \in \Ehi \cup \EhD} \LRa{\jump{\nabla \Pic u }, {\vn - I_h \vn} }_e .
}
By \eqref{eq:Ih-estm1} and \eqref{eq:Ih-estm2}, we have 
\algns{
\LRp{ f + \Delta \Pic u, \vn - I_h \vn}_T &= \norw{f + \Delta \Pic u}{0,T} \norw{\vn - I_h \vn}{0,T} \\
&\lesssim h_T \norw{f + \Delta \Pic u}{0,T} |\vn|_{H_h^1(M_T)} , \\
\LRa{ \jump{\nabla \Pic u }, \vn - I_h \vn}_e &\le \norw{\jump{\nabla \Pic u }}{0,e} \norw{\vn - I_h \vn}{0,e} \\
&\lesssim h_e^{\half} \norw{\jump{\nabla \Pic u }}{0,e} |\vn|_{H_h^1(M_e)} .
}
The elements in $\{ M_T\}$ and $\{M_e\}$ overlap only finitely many times with a uniform finite number, so
the Cauchy--Schwarz inequality and the a posteriori error estimates \eqref{eq:aposteriori-1}--\eqref{eq:aposteriori-2} yield \eqref{eq:Pi0u-v-estm}. 

Note that 
\algn{ \label{eq:H1-ah-estm}
| \vn |_{H_h^1}^2 \lesssim h^{\alpha} \norw{ \vn}{a_h, 0}^2 \lesssim h^{\alpha} \norw{ \vn}{a_h}^2 .
}
By this and by taking $\vn = \Pin u$ in \eqref{eq:Pi0u-v-estm}, 
\algns{
h^{-\alpha} | \Pin u |_{H_h^1}^2 \lesssim \norw{\Pin u}{a_h, 0}^2 \le  \norw{\Pin u}{a_h}^2 \lesssim | \Pin u |_{H_h^1} \LRp{ \inf_{w \in \Vhc} \norw{u - w}{1} + \osc(f) } ,  
}
so we have 
\algn{ 
\label{eq:Pi0u-estm} | \Pin u |_{H_h^1} &\lesssim h^{\alpha} \LRp{ \inf_{w \in \Vhc} \norw{u - w}{1} + \osc(f) } , \\
\label{eq:Pi0u-estm2} \norw{\Pin u}{a_h} &\lesssim h^{\alpha/2} \LRp{ \inf_{w \in \Vhc} \norw{u - w}{1} + \osc(f) } .
}
We now define $\Pi_h : H^1(\Omega) \ra \Vh$ as $\Pi_h = \Pic + \Pin$. Then
\algns{
&a_h( \uh - \Pi_h u, \uh - \Pi_h u) \\
&\quad = \F( \uh - \Pi_h u) - a_h( \Pi_h u, \uh - \Pi_h u ) \\
&\quad = \F(\uhc - \Pic u) + \F(\uhn - \Pin u) - a_h(\Pic u, \uhc - \Pic u) \\ %- \ah( u - \Pic u - \Pin u, (\uhc - \Pic u) + (\uhn - \Pin u ) ) .\\
&\qquad  - a_h(\Pin u, \uhc - \Pic u) - a_h(\Pic u, \uhn - \Pin u) - a_h(\Pin u, \uhn - \Pin u). 
}
Since 
\algns{ 
\F(\uhc - \Pic u) - a_h(\Pic u, \uhc - \Pic u) &= 0, \\
\F(\uhn - \Pin u) - a_h( \Pic u , \uhn - \Pin u ) &= - a_h(\Pin u, \uhn - \Pin u ) ,
}
by \eqref{eq:Pic-proj} and \eqref{eq:Pi0-proj}, we have 
\algn{
&|a_h( \uh - \Pi_h u, \uh - \Pi_h u) | \notag \\
&\quad = | a_h (\Pin u, (\Pic u - \uhc) ) + 2 a_h (\Pin u, (\Pin u - \uhn) ) | \notag \\ % \le \norw{\Pin u}{a_h} \norw{\Pic u - \uhc}{a_h} .
&\label{eq:ah-estm} \quad = |a_h (\Pin u, \Pi u - \uh ) + a_h(\Pin u, \Pin u - \uhn) | \\
&\quad \lesssim \norw{\Pin u}{a_h} \norw{\Pi_h u - \uh}{a_h} + \norw{\Pin u}{a_h} \norw{\Pin u - \uhn}{a_h}  \notag .
}
By Lemma~\ref{lemma:V0-bdy}, $\norw{\Pi_h^0 u - u_h^0}{a_h,\pd} \lesssim h^{(\alpha-1)/2}\norw{\Pi_h^0 u - u_h^0}{a_h,0}$. Since 
\algns{
\norw{\Pi_h^0 u - u_h^0}{a_h,0} \le \norw{\Pi_h u - u_h}{a_h,0} \le \norw{\Pi_h u - u_h}{a_h},
}
we have %by the definition of $\norw{\cdot}{\ah, 0}$, 
\algns{
\norw{\Pi_h^0 u - u_h^0}{a_h} \le \norw{\Pi_h^0 u - u_h^0}{a_h, 0}  + \norw{\Pi_h^0 u - u_h^0}{a_h, \pd}  \lesssim \norw{\Pi_h u - u_h}{a_h} ,
}
therefore we can obtain 
\algns{
  \norw{\uh - \Pi_h u}{a_h} \lesssim \norw{\Pin u}{a_h}
}
from \eqref{eq:ah-estm}. If we use \eqref{eq:Pi0u-estm2}, 
then one can obtain with the coercivity of $a_h$ that
\algns{
\norw{\uh - \Pi_h u}{a_h} \lesssim  h^{\alpha/2} \LRp{\inf_{v \in \Vhc} \norw{u - v}{1} + \osc(f) } .
}
In particular, by $| \uhn - \Pi_h^0 u |_{H_h^1} \lesssim h^{\alpha/2} \norw{\uhn - \Pi_h^0 u}{a_h,0} \lesssim h^{\alpha/2} \norw{\uh - \Pi_h u}{a_h,0}$, we obtain
\algns{
 | \uhn - \Pi_h^0 u |_{H_h^1} \lesssim h^{\alpha} \LRp{\inf_{v \in \Vhc} \norw{u - v}{1} + \osc(f) } ,
}
therefore \eqref{eq:u0-jump-estm} follows from this, \eqref{eq:Pi0u-estm}, and the triangle inequality.

To complete the proof we estimate $\norw{u - \Pi_h u}{a_h}$. By the triangle inequality and \eqref{eq:Pi0u-estm2}, 
\algns{
\norw{u - \Pi_h u}{a_h} &\le \norw{ u - \Pic u}{a_h} + \norw{\Pin u}{a_h} \lesssim \inf_{w \in \Vhc} \norw{u - w}{1} + \osc(f)
}
where we used the estimate $\norw{ u - \Pic u}{a_h} \lesssim \inf_{w \in \Vhc} \norw{u - w}{1} + \osc(f)$ which holds because $\Pic u$ is 
the solution of Nitsche's method of the problem \eqref{eq:Pic-proj}.
\end{proof}

Recall that the flux is $\zb := - \nabla u$ by Darcy's law. 
We now discuss recovery of locally mass conservative numerical flux $\zb_h$ via local post-processing
and prove an estimate of $\zb - \zb_h$. 
The key result is that the estimate is robust for the over-penalization.

We use $H(\div;\Omega)$ to denote the subspace of $L^2(\Omega; \R^n)$ such that the function and its divergence are square-integrable.
\begin{definition}
We say $\zb_h \in H(\div;\Omega)$ is locally mass conservative if 
\algns{
(\div \zb_h , 1_T) = (f , 1_T)
}
for any $T \in \mc{T}_h$ where $1_T$ is the indicator function on $T$. 
\end{definition}

For construction of locally mass conservative numerical flux we use the Raviart--Thomas--N\'{e}d\'{e}lec  element of order $k$, $N_h$, which has local shape functions 
\algns{
%N_k(T) := 
\mc{P}_{k-1}(T; \R^n) + \bs{x} \mc{P}_{k-1}(T)
}
where $\bs{x}$ is the vector-valued polynomial $\bs{x} = (x \;y)^T$ if $n=2$ and $\bs{x} = (x \;y \;z)^T$ if $n=3$. 
\begin{theorem} 
	\label{thm:flux-error}
If we define $\zb_h \in N_h$ as 
\algns{
(\zb_h, \bs{q})_T &= (- \nabla \uh , \bs{q})_T, &  & \bs{q} \in \Pb_{k-2}(T), \\
\LRa{ \zb_h \cdot \n, q }_e &= \LRa{- \avg{\nabla \uh} \cdot \n + \gammakap h_e^{-1-\alpha} \jump{\uh} \cdot \n , q }_e, & & q \in \mc{P}_{k-1}(e), e \in \Ehi , \\
\LRa{ \zb_h \cdot \n, q }_e &= \LRa{u_N , q }_e, & & q \in \mc{P}_{k-1}(e), e \not\in \Ehi \cup \EhD, \\
\LRa{ \zb_h \cdot \n, q }_e &= \LRa{- {\nabla \uh} \cdot \n + \gammakap h_e^{-1} (\uh - u_D) , q }_e, & & q \in \mc{P}_{k-1}(e), e \in \EhD , \\
}
then 
\algns{
\norw{\zb_h + \nabla \uh}{0} \lesssim \inf_{w \in \Vhc} \norw{u - w}{1} + \osc(f) 
}
with an implicit constant independent of $h$.
\end{theorem}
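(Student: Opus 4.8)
The plan is to estimate $\norw{\zb_h + \nabla \uh}{0}$ element by element, using the defining equations for $\zb_h$ together with an inverse-type scaling argument on the Raviart--Thomas degrees of freedom, and then to absorb the resulting facet jump contributions using the error estimates already proved in the previous theorem. First I would fix $T \in \mc{T}_h$ and write $\bs{w}_T := \zb_h + \nabla \uh \in \Pb_{k-1}(T;\R^n)$ on $T$; note that $\nabla \uh|_T \in \Pb_{k-1}(T;\R^n)$ so this is a genuine polynomial of degree $\le k-1$ on each element. On the interior moments $\bs{q} \in \Pb_{k-2}(T;\R^n)$ the defining relation gives $(\bs{w}_T,\bs{q})_T = 0$, so the only nonvanishing part of $\bs{w}_T$ against the RT degrees of freedom comes from the facet functionals $\LRa{ \bs{w}_T \cdot \n, q}_e$ for $q \in \mc{P}_{k-1}(e)$, $e \subset \pd T$. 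On an interior facet $e \in \Ehi$ we have, by the second defining equation, $\bs{w}_T \cdot \n^\pm|_e = (\nabla\uh^\pm - \avg{\nabla \uh})\cdot\n^\pm + \gammakap h_e^{-1-\alpha}\jump{\uh}\cdot\n^\pm$, and both terms can be rewritten in terms of $\jump{\nabla \uh}$ and $\gammakap h_e^{-1-\alpha}\jump{\uh}$ on $e$; on $e \in \EhD$ similarly $\bs{w}_T\cdot\n|_e = \gammakap h_e^{-1}(\uh - u_D)$; and on exterior Neumann facets $\bs{w}_T\cdot\n|_e = u_N + \nabla\uh\cdot\n$, which is the consistency residual of the Neumann condition.

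Next I would invoke the standard scaled norm equivalence for RT elements: for $\bs{w}_T \in \Pb_{k-1}(T;\R^n)$,
\algns{
\norw{\bs{w}_T}{0,T}^2 \lesssim \sum_{\bs{q}} |(\bs{w}_T,\bs{q})_T|^2 + \sum_{e \subset \pd T} h_e \norw{\bs{w}_T\cdot\n}{0,e}^2,
}
where the interior sum is taken over an $L^2$-orthonormal basis of $\Pb_{k-2}(T;\R^n)$ scaled so that the bound is dimensionally correct (this is just the usual reference-element argument plus affine scaling, using shape regularity). The interior terms vanish, so
\algns{
\norw{\zb_h + \nabla\uh}{0,T}^2 \lesssim \sum_{e \subset \pd T} h_e \norw{(\zb_h + \nabla\uh)\cdot\n}{0,e}^2.
}
Summing over $T$ and using that each facet is shared by at most two elements, I reduce the claim to bounding $\sum_{e\in\Ehi} h_e\|\jump{\nabla\uh}\|_{0,e}^2$, $\sum_{e\in\Ehi} h_e\cdot\gammakap^2 h_e^{-2-2\alpha}\|\jump{\uh}\|_{0,e}^2$, $\sum_{e\in\EhD} h_e\cdot\gammakap^2 h_e^{-2}\|\uh-u_D\|_{0,e}^2$, and $\sum_{e\not\in\Ehi\cup\EhD} h_e\|u_N + \nabla\uh\cdot\n\|_{0,e}^2$.

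The second of these is the crucial over-penalization term, and this is where I expect the main obstacle to lie: naively, $h_e \cdot h_e^{-2-2\alpha}\|\jump{\uh}\|_{0,e}^2 = h_e^{-1-2\alpha}\|\jump{\uh}\|_{0,e}^2$, which for $\alpha \ge 1$ carries $2\alpha$ extra negative powers of $h$ relative to the energy-norm jump term $h_e^{-1-\alpha}\|\jump{\uh}\|_{0,e}^2$ controlled by $\norw{\uh}{a_h}$. So a bound purely in terms of $\norw{u-\uh}{a_h}$ is not enough; I must use the \emph{sharper} interior-jump estimate \eqref{eq:u0-jump-estm}, namely $|\uhn|_{H_h^1} \lesssim h^\alpha(\inf_v\norw{u-v}{1} + \osc(f))$. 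Since $\jump{\uh} = \jump{\uhc} + \jump{\uhn} = \jump{\uhn}$ (because $\uhc \in \Vhc \subset H^1$), one has $\sum_{e\in\Ehi} h_e^{-1}\|\jump{\uh}\|_{0,e}^2 \le |\uhn|_{H_h^1}^2 \lesssim h^{2\alpha}(\cdots)^2$, so $\sum_{e\in\Ehi} h_e^{-1-2\alpha}\|\jump{\uh}\|_{0,e}^2 \lesssim h^{-2\alpha}\max_e h_e^{-2\alpha}\cdot\,|\uhn|_{H_h^1}^2$... — here I would need the quasi-uniformity (equivalently the hypothesis $\max_{e\in\EhD}h_e^{-1/2}\max_{e\in\Ehi}h_e^{\alpha/2}\lesssim h^{(\alpha-1)/2}$ used in the previous theorem) to convert $\max_e h_e^{-2\alpha}$ into $h^{-2\alpha}$, giving exactly the cancellation $h^{-2\alpha}\cdot h^{2\alpha} = 1$ and hence the $h$-robust bound. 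For the interior $\jump{\nabla\uh}$ term I would apply the a~posteriori efficiency estimate \eqref{eq:aposteriori-2} to $\uhc$ together with $\jump{\nabla\uh}$-vs-$\jump{\nabla\uhc}$ comparison (controlled by $\nabla\uhn$, again via $|\uhn|_{H_h^1}$); the Dirichlet term $h_e^{-1}\|\uh - u_D\|_{0,e}^2 = h_e^{-1}\|(\uh-u) + (u-u_D)\|_{0,e}^2$ with $u|_{\Gamma_D}=u_D$ reduces to the boundary part of $\norw{u-\uh}{a_h}^2$; and the Neumann residual $\|u_N+\nabla\uh\cdot\n\|_{0,e}$ is handled by writing $u_N = \nabla u\cdot\n$ on $\Gamma_N$, a trace/inverse inequality, and $\norw{u-\uh}{a_h}$. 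Finally, collecting all four contributions and invoking \eqref{eq:u-ah-estm} and \eqref{eq:u0-jump-estm} yields $\norw{\zb_h+\nabla\uh}{0} \lesssim \inf_{w\in\Vhc}\norw{u-w}{1} + \osc(f)$ with constant independent of $h$, which is the asserted robustness in $\alpha$.
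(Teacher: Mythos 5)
Your proposal is correct and follows essentially the same route as the paper's proof: the elementwise scaling (norm-equivalence) argument for the Raviart--Thomas reconstruction with vanishing interior moments, identification of the facet moments with the jump/penalty residuals, the efficiency estimate \eqref{eq:aposteriori-2} for the $\jump{\nabla \uh}$ term, the boundary part of $\norw{u-\uh}{a_h}$ for the Dirichlet term, and—crucially, exactly as in the paper—the sharper interior-jump bound \eqref{eq:u0-jump-estm} combined with quasi-uniformity to cancel the extra $h^{-2\alpha}$ coming from the over-penalization. The only cosmetic deviations are that $\zb_h+\nabla\uh$ lies in the local RT space rather than $\Pb_{k-1}(T;\R^n)$ (the scaling argument is unaffected) and that the paper takes the Neumann-facet moments to vanish where you bound a consistency residual; neither changes the substance of the argument.
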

\begin{proof}
By definition $\zb_h$ is in $H(\div;\Omega)$ and 
\algns{
(\div \zb_h, 1_T)  &= \sum_{e \subset \pd T, e \in \EhD} \LRa{- \nabla \uh \cdot \n + \gammakap h_e^{-1} (\uh - u_D) , 1_T}_e \\
&\quad + \sum_{e \subset \pd T, e \in \Ehi} \LRa{- \avg{\nabla \uh} + \gammakap h_e^{-1-\alpha} \jump{\uh} \cdot \n, 1_T}_e  \\
&\quad + \sum_{e \subset \pd T, e \not \in \Ehi \cup \EhD} \LRa{u_N, 1_T}_e .
}
If we take $v \in \Vh$ in \eqref{eq:eg-eq} as the indicator function $1_T$ and use the definition of $a_h$ in \eqref{eq:ah}, then
\algns{
(\div \zb_h, 1_T) = (f, 1_T) ,
}
so $\zb_h$ is locally mass conservative.

From the definition of $\zb_h$, $\zb_h + \nabla \uh$ satisfies
\algns{
({\zb}_h + \nabla \uh, \bs{q})_T &= 0 , &  & \bs{q} \in \Pb_{k-2}(T),
}
and 
\algn{ \label{eq:flux-case}
\LRa{ ({\zb}_h + \nabla \uh) \cdot \n, q}_e &= 
\case{
\LRa{-\half \jump{\nabla \uh} + \gammakap h_e^{-1-\alpha} \jump{\uh} \cdot \n, q }_e, & e \in \Ehi \\ %q \in \mc{P}_{k-1}(e) \\
0, & e \not \in \Ehi \cup \EhD \\
\LRa{ \gammakap h_e^{-1} (\uh - u_D) , q }_e, & e \in \EhD 
} 
}
for $q \in \mc{P}_{k-1}(e)$. For $e \in \Ehi$, the standard scaling argument gives 
\algns{
\norw{{\zb}_h + \nabla \uh}{0,T} &\lesssim \sum_{e \subset \pd T} h_e^{\half} \norw{({\zb}_h + \nabla \uh) \cdot \n }{0,e} .
}
We now estimate the above term for three cases of $e \in\Eh$. 
If $e \not \in \Ehi \cup \EhD$, then there is nothing to estimate by \eqref{eq:flux-case}. 
If $e \in \Ehi$, then 
\algns{
\norw{{\zb}_h + \nabla \uh}{0,T} &\lesssim \sum_{e \subset \pd T} h_e^{\half}\norw{-\half \jump{\nabla \uh} + \gammakap h_e^{-1-\alpha} \jump{\uh} \cdot \n }{0,e} \\
&\lesssim \sum_{e \subset \pd T} h_e^{\half} \LRp{ \norw{\jump{\nabla \uh} }{0,e} + \gammakap h_e^{-1-\alpha} \norw{\jump{\uh} }{0,e} } ,
}
so we need to show 
\mltln{ \label{eq:flux-aux-estm}
  \LRp{ \sum_{e \in \Ehi} h_e \| \jump{\nabla u_h} \|_{0,e}^2 }^{\frac 12} + \LRp{ \sum_{e \in \Ehi} h_e^{-1-2\alpha} \| \jump{u_h} \|_{0,e}^2 }^{\frac 12} \\ 
  \lesssim \inf_{v \in \Vhc} \norw{u - v}{1} + \osc(f) .
}
The first term in this inequality is easily obtained by \eqref{eq:aposteriori-2}. For the second term note that the quasiuniformity assumption implies 
\algns{
\sum_{e \in \Ehi } h_e^{-1-2\alpha} \norw{\jump{\uh}}{0,e}^2  \lesssim h^{-2\alpha} | \uhn |_{H_h^1}^2 \lesssim \LRp{\inf_{v \in \Vhc} \norw{u - v}{1} + \osc(f) }^2 
}
by \eqref{eq:u0-jump-estm}. 
For $e \in \EhD$, the scaling argument gives
\algns{
	\tred{\norw{{\zb}_h + \nabla \uh}{0,T}} &\lesssim \sum_{e \subset \pd T} h_e^{\half} \norw{ \gammakap h_e^{-1} (\uh - u_D) }{0,e} \lesssim \norw{ u - \uh}{a_h} .
}
Therefore, the conclusion follows. 
\end{proof}

\section{Preconditioning}
\label{sec:preconditioning}
In this section we propose an abstract form of block diagonal preconditioners. \tred{Through} the operator preconditioning approach, we show that the preconditioners are spectrally equivalent to the matrix given by \eqref{eq:eg-eq} independent of mesh sizes.

We define a norm $\tnorm{\cdot}$ on $\VBh$ as 
\algn{ 
	\notag \tnorm{ (\vc, \vn) }^2 &:= a_h(\vc, \vc) + a_h(\vn, \vn) \\
	\label{eq:tnorm} &= \tred{\LRp{ \nabla \vc, \nabla \vc}} + \LRa{ \gammakap h_e^{-1} \vc , \vc }_{\mc{E}_h^{D}} \\ 
	\notag &\quad + \LRa{ \gammakap h_e^{-1-\alpha} \jump{{\vn}},\jump{{\vn}} }_{\Ehi } + \LRa{ \gammakap h_e^{-1} \vn , \vn }_{\EhD} 
}
for $\vc \in \Vhc$, $\vn \in \Vhn$. Note that this is a norm on $\VBh$, not only on $\Vh$. For an element $v \in \Vh$ the decomposition $v = \vc + \vn$ is not unique, so $\tnorm{v}$ is not uniquely defined. However, we will use $\tnorm{\vc}$ and $\tnorm{\vn}$ for $\vc \in \Vhc$ and $\vn \in \Vhn$ instead of $\tnorm{(\vc, 0)}$ and $\tnorm{(0,\vn)}$ for convenience
%
%\algns{
%	(\nabla \vc, \nabla \vc)^{\half}, \quad \LRa{ \gammakap h_e^{-1} \vc , \vc }_{\mc{E}_h^{D}}^{\half}, \quad \LRa{ \gammakap h_e^{-1-\alpha} \jump{{\vn}},\jump{{\vn}} }_{\Ehi }^{\half}, \quad \LRa{ \gammakap h_e^{-1} \jump{{\vn}},\jump{{\vn}} }_{\EhD }^{\half} 
%}
%
even though these quantities are not norms.

Recall that $\Vhperp$ and $\Vh$ have a one-to-one correspondence, so we will use $v$ to denote an element in $\Vh$ and its corresponding element in $\Vhperp$. Let ${\mathcal{A}_h} : \Vhperp \ra (\Vhperp)^*$ be the linear operator defined by 
$\LRa{{\mathcal{A}_h} v , w}_{\LRa{\VBh, \VBh^*}} = a_h(v, w)$. 
Following the operator preconditioning approach \cite{Mardal-Winther-2011}, we first show that ${\mathcal{A}_h}$ is a bounded isomorphism from $\Vhperp$ to $(\Vhperp)^*$ with the norm $\tnorm{\cdot}$.

For preconditioning we consider the linear operator $\tilde{{\mathcal{A}_h}} : \VBh \ra \VBh^*$ defined by the bilinear form
\algns{
 a_h(\vc, \vc) + a_h(\vn, \vn) ,
}
and show that $\tilde{{\mathcal{A}_h}}$ is spectrally equivalent to ${\mathcal{A}_h}$ on $\Vhperp$ with $\tnorm{\cdot}$.
As a consequence, we can expect that $\mc{B}_h = \tilde{{\mathcal{A}_h}}^{-1}$ is a good preconditioner of ${\mathcal{A}_h}$. 

It is obvious that ${\mathcal{A}_h}$ gives a bounded bilinear form on $\Vhperp \times \Vhperp$ with $\tnorm{\cdot}$ norm.
To show that $\tilde{{\mathcal{A}_h}}$ is spectrally equivalent to ${\mathcal{A}_h}$ on $\Vhperp$, we prove %an inf-sup condition
\algns{
\inf_{v = \vc + \vn \in \Vh} \sup_{w = \wc + \wn \in \Vh} \frac{a_h(v, w)}{\tnorm{(\vc, \vn)} \tnorm{(\wc, \wn)}} \ge C > 0 .
}
We remark that the coercivity of $a_h(\cdot, \cdot)$ for $\norw{\cdot}{a_h}$ does not imply this condition because $\norw{v}{a_h} \lesssim \tnorm{(\vc, \vn)}$ for $v = \vc + \vn$ but the inequality of the other direction is not true in general.

\begin{definition}
For $T \in \Th$, we define $N(T)$ as the number of elements adjacent to $T$, i.e., 
$N(T) = | \{ T' \in \Th \,:\, \pd T' \cap \pd T \not = \emptyset, T' \not = T \} |$.
For $\vn \in \Vhn$, $T_0 \in \mc{T}_h$, and the elements $\{T_i \}_{i=1,..., N(T_0)}$ adjacent to $T_0$, let 
$p_i$ be the value of $\vn|_{T_i}$, $i = 0, ..., N(T_0)$.
We define $\phi_{\vn, T_0} \in \Vhn$ as 
\algn{ \label{eq:phi}
\phi_{\vn, T_0} |_{T} = \case{ \frac{1}{N(T_0)+1} \sum_{i=0}^{N(T_0)} (p_i - p_0) \qquad &\text{if } T = T_0  \\ 
0 &\text{otherwise } 
}.
}
\end{definition}

\begin{lemma} \label{lemma:vwtil}
Suppose that $\alpha \ge 1$. Given $\vn \in \Vhnn$, define $\bar{v}^0$ as $\bar{v}^0:= \sum_{T \in \Th^D} \phi_{\vn, T}$. Then 
\algn{ \label{eq:wtil-estm}
\norw{\bar{v}^0}{a_h,\pd} \le \tnorm{\bar{v}^0} \le C_0 \norw{\vn}{a_h,0} . 
}
with $C_0>0$ independent of $h$. 
As a corollary, $\vtiln := \vn + \bar{v}^0$ satisfies 
\algn{ 
\label{eq:vwtil-estm} &\norw{\vtiln}{a_h,0} \le (1 + C_0) \norw{\vn}{a_h, 0}, \\
\label{eq:vwtil-estm2} &\norw{\vtiln }{a_h, \pd} \lesssim \norw{\vn }{a_h, 0}
}
with an implicit constant depending on the shape regularity of $\Th$, the constants of the trace theorem and discrete Poincar\'{e} inequality.
\end{lemma}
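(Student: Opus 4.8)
The key is to understand the function $\bar{v}^0 := \sum_{T \in \Th^D} \phi_{\vn,T}$, which modifies $\vn$ only on the boundary-touching elements $T \in \Th^D$ by subtracting a local average of the jumps across the faces of $T$. First I would record the pointwise formula: on each $T_0 \in \Th^D$, $\bar{v}^0|_{T_0} = \tfrac{1}{N(T_0)+1}\sum_{i=0}^{N(T_0)}(p_i - p_0)$ where $p_0 = \vn|_{T_0}$ and $p_i = \vn|_{T_i}$ over the neighbors $T_i$ of $T_0$. The crucial observation is that each difference $p_i - p_0$ is (up to a bounded factor depending on shape regularity) exactly $h_e^{-1/2}\norw{\jump{\vn}}{0,e}$ for the interior face $e$ shared by $T_0$ and $T_i$, so $|\bar{v}^0|_{T_0}|^2 \lesssim \sum_{e \subset \pd T_0 \cap \Ehi} h_e^{-1}\norw{\jump{\vn}}{0,e}^2$ after applying Cauchy--Schwarz over the at most $N(T_0)+1$ terms. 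Multiplying by $|T_0| \sim h_{T_0}^n$ and using $\norw{\bar{v}^0}{0,T_0}^2 = |T_0|\,|\bar{v}^0|_{T_0}|^2$ gives $\norw{\bar{v}^0}{0,T_0}^2 \lesssim h_{T_0} \sum_{e \subset \pd T_0} \norw{\jump{\vn}}{0,e}^2$. Summing over $T_0 \in \Th^D$ and using finite overlap of the neighbor patches yields $\norw{\bar{v}^0}{0,\Omega}^2 \lesssim \sum_{e \in \Ehi} h_e \norw{\jump{\vn}}{0,e}^2$. Here quasi-uniformity (or just $h_e \le 1$) together with $\alpha \ge 1$ converts $h_e \norw{\jump{\vn}}{0,e}^2 = h_e^{2+\alpha}\cdot h_e^{-1-\alpha}\norw{\jump{\vn}}{0,e}^2 \lesssim h_e^{-1-\alpha}\norw{\jump{\vn}}{0,e}^2$, so $\norw{\bar{v}^0}{0,\Omega}^2 \lesssim \gammakap^{-1}\norw{\vn}{a_h,0}^2$ after inserting the penalty weight.

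Next I would estimate the three pieces of $\tnorm{\bar{v}^0}^2$ from \eqref{eq:tnorm}: the gradient term $(\nabla \bar{v}^0, \nabla \bar{v}^0)$ vanishes since $\bar{v}^0$ is piecewise constant; the boundary term $\LRa{\gammakap h_e^{-1}\bar{v}^0, \bar{v}^0}_{\EhD}$ is handled by the discrete trace inequality $\norw{\bar{v}^0}{0,e}^2 \lesssim h_e^{-1}\norw{\bar{v}^0}{0,T}^2$ for $e \subset \pd T$, giving $\LRa{\gammakap h_e^{-1}\bar{v}^0,\bar{v}^0}_{\EhD} \lesssim \gammakap \sum_{T \in \Th^D} h_T^{-2}\norw{\bar{v}^0}{0,T}^2$; and the interior-jump term $\LRa{\gammakap h_e^{-1-\alpha}\jump{\bar{v}^0},\jump{\bar{v}^0}}_{\Ehi}$, where I again use a discrete trace/inverse estimate to bound $\norw{\jump{\bar{v}^0}}{0,e}^2 \lesssim h_e^{-1}(\norw{\bar{v}^0}{0,T}^2 + \norw{\bar{v}^0}{0,T'}^2)$ for the two elements sharing $e$, noting $\jump{\bar{v}^0}$ is supported only on faces incident to $\Th^D$. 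Thus every piece of $\tnorm{\bar{v}^0}^2$ is controlled by $\sum_{T \in \Th^D} h_T^{-2-\alpha}\norw{\bar{v}^0}{0,T}^2$ up to $\gammakap$ and shape-regularity constants. Feeding in the per-element bound $\norw{\bar{v}^0}{0,T_0}^2 \lesssim h_{T_0}\sum_{e \subset \pd T_0}\norw{\jump{\vn}}{0,e}^2$ and using quasi-uniformity once more to absorb the powers of $h$ shows $\tnorm{\bar{v}^0}^2 \lesssim \gammakap \sum_{e \in \Ehi} h_e^{-1-\alpha}\norw{\jump{\vn}}{0,e}^2 \le \norw{\vn}{a_h,0}^2$, which, together with the trivial inequality $\norw{\bar{v}^0}{a_h,\pd} \le \tnorm{\bar{v}^0}$ (the $\pd$-part is one of the summands in $\tnorm{\cdot}$), gives \eqref{eq:wtil-estm} with some $C_0 > 0$ independent of $h$.

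For the corollary, \eqref{eq:vwtil-estm} is immediate: $\norw{\vtiln}{a_h,0} \le \norw{\vn}{a_h,0} + \norw{\bar{v}^0}{a_h,0} \le \norw{\vn}{a_h,0} + \tnorm{\bar{v}^0} \le (1+C_0)\norw{\vn}{a_h,0}$, using that $\norw{w}{a_h,0}^2 = (\nabla w,\nabla w) + \LRa{\gammakap h_e^{-1-\alpha}\jump{w},\jump{w}}_{\Ehi}$ is bounded by $\tnorm{w}^2$. For \eqref{eq:vwtil-estm2} I would write $\norw{\vtiln}{a_h,\pd} \le \norw{\vn}{a_h,\pd} + \norw{\bar{v}^0}{a_h,\pd}$; the second term is $\le \tnorm{\bar{v}^0} \lesssim \norw{\vn}{a_h,0}$ by \eqref{eq:wtil-estm}, and the first term $\norw{\vn}{a_h,\pd} \lesssim \norw{\vn}{a_h,0}$ is exactly Lemma~\ref{lemma:V0-bdy} combined with $h^{(\alpha-1)/2} \le 1$ under quasi-uniformity (or, more directly, by the same discrete Poincaré argument used there).

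The main obstacle is the book-keeping in the per-element estimate of $\norw{\bar{v}^0}{0,T_0}^2$: one must verify that the local averaging operator $\phi_{\vn,T_0}$ genuinely produces a quantity comparable to the face jumps of $\vn$ and not merely to the values $p_i$ themselves — this is what makes $\bar{v}^0$ small (order of the jumps, hence order $h^{(1+\alpha)/2}$ in the relevant weighted norm) rather than order one. The design of $\phi_{\vn,T_0}$ as a difference $\sum (p_i - p_0)$ rather than $\sum p_i$ is precisely what guarantees this, and confirming the constants are uniform in $h$ (using only shape regularity, the trace theorem, and the discrete Poincaré inequality, plus quasi-uniformity to juggle the $h$-powers forced by the mismatched scalings of the over-penalized term) is the technical heart of the argument.
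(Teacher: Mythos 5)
Your proposal is correct and follows essentially the same route as the paper: the key point in both is that $\phi_{\vn,T_0}$ is an average of the jump values $p_i-p_0$, so its contribution to $\tnorm{\cdot}$ is controlled by the over-penalized interior jumps of $\vn$, after which \eqref{eq:vwtil-estm} is the triangle inequality and \eqref{eq:vwtil-estm2} uses Lemma~\ref{lemma:V0-bdy} with $h^{(\alpha-1)/2}\le 1$. Your detour through the elementwise $L^2$ norm of $\bar v^0$ plus discrete trace inequalities, instead of the paper's direct face-by-face estimate of $\norw{\jump{\phi_{\vn,T_0}}}{0,e}$, is only a bookkeeping variation and yields the same constants up to shape regularity.
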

\begin{proof}
Let $T_0$ be an element in $\mc{T}_h^D$. For simplicity of presentation we assume that $T_0$ has only one facet in $\pd T_0 \cap \Gamma_D$
but extension of the arguments below to more general cases is straightforward. 

Assume that $e_0 = \pd T_0 \cap \Gamma_D$, and %for $\{T_i\}_{i=1}^{N(T_0)}$ the elements adjacent to $T_0$, we define $e_i = \pd T_0 \cap \pd T_i$. 
%which has facets $e_0$, $e_1$, ..., $e_{N(T_0)}$ in $\mc{E}_h$ with $e_0 = \pd T_0 \cap \pd \Omega$. 
$\{T_i\}_{i=1}^{N(T_0)}$ are the elements adjacent to $T_0$ such that $e_i = \pd T_i \cap \pd T_0$.
Denoting the value of $v^0$ on $T_i$ by $p_i$, $i = 0, ..., N(T_0)$, 
$|\jump{\vn}|$ on each $e_i$ is $|p_i - p_0|$.
By the definition of $\phi_{\vn, T_0}$ 
\algns{
| \jump{\phi_{\vn, T_0}}|_{e_j} | = \left| \frac{1}{N(T_0)+1} \sum_{i=1}^{N(T_0)} (p_i - p_0)\right| \le  \frac{1}{N(T_0)+1} \sum_{i=1}^{N(T_0)} | p_i - p_0 | 
}
for $0 \le j \le N(T_0)$, so the Cauchy--Schwarz inequality and the definition of $\tnorm{\cdot}$ give
\algns{
\tnorm{ \phi_{\vn, T_0} }^2 &= \sum_{e \subset \pd T_0, e \in \Ehi } \gamma h_e^{-1 - \alpha} \norw{ \jump{\phi_{\vn, T_0}} }{0,e}^2 +
\gamma h_{e_0}^{-1} \norw{ \phi(\vn, T_0)}{0,e_0}^2 \\
&\lesssim \gamma \sum_{i=1}^{N(T_0)} h_{e_i}^{-1-\alpha} \norw{ \jump{\vn} }{e_i}^2 , \\
\norw{ \phi_{\vn, T_0} }{a_h,\pd}^2 &= \gamma h_{e_0}^{-1} \norw{ \phi(\vn, T_0)}{0,e_0}^2  \lesssim h^{\alpha} \sum_{i=1}^{N(T_0)} \gamma h_{e_i}^{-1-\alpha} \norw{  \jump{\vn} }{0,e_i}^2 .
}
By the definition of $\bar{v}^0$, $\bar{v}^0|_T = \phi(\vn, T) $ for all $T \in \mc{T}_h^{D}$ and $\bar{v}^0|_T = 0$ for $T \not \in \mc{T}_h^D$.
Since each facet is shared at most by two elements, the triangle inequality with the above argument gives 
\algn{ \label{eq:wtil-estm0}
\tnorm{\bar{v}^0}^2 \lesssim  \norw{\vn}{a_h,0}^2 ,
}
which also implies $\norw{\bar{v}^0}{a_h,\pd}^2 \lesssim \norw{\vn}{a_h,0}^2$.
Then, \eqref{eq:vwtil-estm} follows by the triangle inequality $\norw{\vtiln}{a_h,0} \le \norw{\vn}{a_h,0} + \norw{\bar{v}^0}{a_h,0}$ and the inequality $\norw{\bar{v}^0}{a_h,0} \le \tnorm{\bar{v}^0}$. \eqref{eq:vwtil-estm2} follows by  $\norw{\vtiln}{a_h,\pd} \le \norw{\vn}{a_h,\pd} + \norw{\bar{v}^0}{a_h,\pd}$, \eqref{eq:vn-bdy-vn-int} and \eqref{eq:wtil-estm0}. 
%Then \eqref{eq:vwtil-estm}, \eqref{eq:vwtil-estm2} follow from the triangle inequality and \eqref{eq:wtil-estm0}, \eqref{eq:wtil-estm1}. 
%The first estimate in \eqref{eq:vwtil-estm} is a direct consequence of the triangle inequality and \eqref{eq:wtil-estm0}. 
\end{proof}

\begin{theorem} \label{thm:precond-estm}
Assume that the assumptions in Lemma~\ref{lemma:vwtil} hold. If $\alpha \ge 1$, then there exists $C>0$ such that 
\algn{ \label{eq:inf-sup}
\inf_{v = \vc + \vn \in \Vh} \sup_{w = \wc + \wn \in \Vh} \frac{a_h(v, w)}{\tnorm{(\vc, \vn)} \tnorm{(\wc, \wn)}} \ge C > 0 .
}
\end{theorem}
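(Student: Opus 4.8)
The plan is to prove the inf--sup condition \eqref{eq:inf-sup} by exhibiting, for each given $v = \vc + \vn \in \Vh$ (with $\vn$ chosen in the mean-value-zero representative so that the discrete Poincar\'e inequality applies), an explicit test function $w = \wc + \wn$ for which $a_h(v,w)$ is bounded below by $\tnorm{(\vc,\vn)}^2$ up to constants, while $\tnorm{(\wc,\wn)} \lesssim \tnorm{(\vc,\vn)}$. The natural first guess $w = v$ fails precisely for the reason noted after the statement: $\norw{v}{a_h}^2 = a_h(v,v)$ controls only $\LRp{\nabla v,\nabla v}$ plus jump terms of the \emph{combined} function $v = \vc + \vn$, whereas $\tnorm{(\vc,\vn)}^2$ additionally asks for control of $\LRa{\gamma h_e^{-1}\vc,\vc}_{\EhD}$ and of the boundary term $\LRa{\gamma h_e^{-1}\vn,\vn}_{\EhD}$ \emph{separately}. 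So the core of the argument is to enrich the test function with a correction that sees the piece $\vn$ on $\Gamma_D$.

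First I would take $\wc = \vc$ and seek $\wn$ of the form $\wn = \vn + \mu\, \bar v^0$, where $\bar v^0 = \sum_{T \in \Th^D}\phi_{\vn,T}$ is the correction from Lemma~\ref{lemma:vwtil} and $\mu>0$ is a small constant to be fixed. Writing $\vtiln = \vn + \bar v^0$ as in that lemma, the test function is $w = \vc + \vtiln$ (for $\mu=1$; a general $\mu$ is handled identically). Then I expand
\algns{
a_h(v,w) &= a_h(\vc+\vn,\ \vc+\vn) + \mu\, a_h(\vc+\vn,\ \bar v^0) \\
&= \norw{v}{a_h}^2 + \mu\, a_h(\vc, \bar v^0) + \mu\, a_h(\vn, \bar v^0).
}
Here $\norw{v}{a_h}^2 \ge \Cc\LRp{\LRp{\nabla v,\nabla v} + \LRa{\gamma h_e^{-1-\alpha}\jump{\vn},\jump{\vn}}_{\Ehi} + \LRa{\gamma h_e^{-1}\jump{v},\jump{v}}_{\EhD}}$ by the coercivity \eqref{eq:coercive}, noting $\jump{\vc}=0$ on interior facets so the interior jump of $v$ equals that of $\vn$. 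The $\mu\, a_h(\vn,\bar v^0)$ term is the productive one: by construction $\bar v^0$ is supported on $\Th^D$ and its boundary jump on $e_0 \subset \Gamma_D$ is designed so that $\LRa{\gamma h_{e_0}^{-1}\vn,\bar v^0}_{e_0}$ is comparable to $\LRa{\gamma h_{e_0}^{-1}\vn,\vn}_{e_0}$ plus cross terms controlled by the interior jumps of $\vn$; this is exactly where the quantity $\norw{\vn}{a_h,\pd}^2$ gets recovered. The remaining cross terms ($\mu\, a_h(\vc,\bar v^0)$, and the off-diagonal contributions inside $\mu\, a_h(\vn,\bar v^0)$) are absorbed: using $\tnorm{\bar v^0} \lesssim \norw{\vn}{a_h,0}$ from \eqref{eq:wtil-estm}, together with the consistency/Cauchy--Schwarz bound $|a_h(x,y)| \lesssim \norw{x}{a_h}\norw{y}{a_h}$, each such term is $\lesssim \mu\,\norw{\vn}{a_h,0}\,\tnorm{(\vc,\vn)}$, hence $\lesssim \mu\,\tnorm{(\vc,\vn)}^2$; choosing $\mu$ small enough these are dominated by a fixed fraction of the good terms. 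What is still missing after all this is control of $\LRa{\gamma h_e^{-1}\vc,\vc}_{\EhD}$: but on each $e \in \EhD$ one has $\vc = v - \vn$ pointwise, so by the triangle inequality $\LRa{\gamma h_e^{-1}\vc,\vc}_{\EhD}^{1/2} \le \LRa{\gamma h_e^{-1}\jump{v},\jump{v}}_{\EhD}^{1/2} + \LRa{\gamma h_e^{-1}\vn,\vn}_{\EhD}^{1/2} = \norw{v}{a_h,\pd} + \norw{\vn}{a_h,\pd}$, and both pieces have now been bounded. Combining, $a_h(v,w) \gtrsim \tnorm{(\vc,\vn)}^2$, and on the other side $\tnorm{(\wc,\wn)} = \tnorm{(\vc,\vtiln)} \le \tnorm{\vc} + \norw{\vtiln}{a_h,0} + \norw{\vtiln}{a_h,\pd} \lesssim \tnorm{(\vc,\vn)}$ by \eqref{eq:vwtil-estm}--\eqref{eq:vwtil-estm2}; dividing gives \eqref{eq:inf-sup}.

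I expect the main obstacle to be the bookkeeping of the cross term $a_h(\vn,\bar v^0)$ and verifying that its ``diagonal'' part genuinely produces a positive multiple of $\norw{\vn}{a_h,\pd}^2$ rather than something that could vanish or have the wrong sign. Concretely, on a simplex $T_0 \in \Th^D$ with boundary facet $e_0$ one needs $\phi_{\vn,T_0}|_{T_0}$ to have the same sign as $\vn|_{T_0}$ relative to its neighbors often enough; the averaging definition \eqref{eq:phi} is built so that $\phi_{\vn,T_0}|_{T_0} = \frac{1}{N(T_0)+1}\sum_i (p_i - p_0)$, which is the (negative of the) discrete Laplacian-type average, and pairing $\gamma h_{e_0}^{-1} \vn|_{T_0}\,\phi_{\vn,T_0}|_{T_0}$ against the boundary term $\gamma h_{e_0}^{-1}(\vn|_{T_0})^2$ via a Young's inequality, with the leftover $\sum_i |p_i-p_0|^2$ absorbed into the interior jump energy $\LRa{\gamma h_e^{-1-\alpha}\jump{\vn},\jump{\vn}}_{\Ehi}$ (this is where $\alpha \ge 1$ and the scaling $h^{\alpha}$ enter, matching \eqref{eq:wtil-estm}), is the delicate estimate. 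A secondary, purely technical point is checking that replacing the non-unique $\Vh$-representative of $v$ by the mean-value-zero $\vn$ is legitimate in the infimum — this follows because $\tnorm{(\vc,\vn)}$ is invariant under the constant shifts $(\vc,\vn) \mapsto (\vc + C, \vn - C)$ only up to the $\Vhnn$ normalization being used, so one argues on $\Vhperp$ exactly as in the reduction already set up before the theorem. Once the sign/Young estimate for $\phi_{\vn,T_0}$ is in hand, the rest is the routine absorption argument sketched above.
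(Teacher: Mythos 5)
Your proposal follows the paper's template (perturb $v$ by a correction built from $\phi_{\vn,T}$, use Lemma~\ref{lemma:vwtil} to control the perturbation, absorb cross terms with a small parameter), but the step on which everything hinges is not correct. You take $\wn=\vn+\mu\,\bar v^0$ and claim that the productive term is $\mu\,a_h(\vn,\bar v^0)$, specifically that $\LRa{\gammakap h_e^{-1}\vn,\bar v^0}_{\EhD}$ is comparable to $\LRa{\gammakap h_e^{-1}\vn,\vn}_{\EhD}$ up to interior-jump contributions. This is false: by \eqref{eq:phi}, $\bar v^0|_{T_0}=\phi_{\vn,T_0}|_{T_0}$ is a combination of the differences $p_i-p_0$ only, so it carries no information about the trace value $p_0=\vn|_{T_0}$ itself. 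If $\vn$ coincides with all its neighbors on the patch around a boundary element $T_0$ (perfectly compatible with $\vn\in\Vhnn$ and with $\gammakap h_{e_0}^{-1}\norw{\vn}{0,e_0}^2$ being large), then $\bar v^0|_{T_0}=0$ and the pairing vanishes; it can also have the wrong sign. Hence the ``delicate sign/Young estimate'' you flag at the end cannot be established, no positive multiple of $\norw{\vn}{a_h,\pd}^2$ is generated by your test function, and your subsequent triangle-inequality recovery of $\LRa{\gammakap h_e^{-1}\vc,\vc}_{\EhD}$ collapses, since it presupposes that $\norw{\vn}{a_h,\pd}$ ``has now been bounded.'' In particular the sign-indefinite term $2\Cc\LRa{\gammakap h_e^{-1}\vc,\vn}_{\EhD}$ coming from expanding $\norw{\vc+\vn}{a_h,\pd}^2$ (worst case $-2\Cc\norw{\vc}{a_h,\pd}\norw{\vn}{a_h,\pd}$) is left unabsorbed.

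The paper's construction is different in exactly this respect: it takes $\wc=\vc$ and $\wn=\vn+\delta\vtiln$ with $\vtiln=\vn+\bar v^0$, so the perturbation contains an extra copy of $\delta\vn$. The productive term is $I_5=\LRa{\gammakap h_e^{-1}\vn,\vtiln}_{\EhD}\ge\norw{\vn}{a_h,\pd}^2-C_1\norw{\vn}{a_h,\pd}\norw{\vn}{a_h,0}$, whose positivity comes from pairing $\vn$ with itself on $\Gamma_D$; $\bar v^0$ enters only as a correction whose contributions are bounded \emph{above} via \eqref{eq:wtil-estm} and \eqref{eq:aux-ineq1}, never as a source of positivity. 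The extra $\delta\norw{\vn}{a_h,\pd}^2$ is then what permits the refined Young inequality $2XY\le(1+\tfrac{\delta}{2})^{-1}X^2+(1+\tfrac{\delta}{2})Y^2$ to absorb the indefinite boundary cross term and still leave positive coefficients. Your argument could be repaired without any sign consideration by invoking \eqref{eq:vn-bdy-vn-int} (as the paper does in \eqref{eq:aux-ineq1}): for the mean-zero representative and $\alpha\ge1$ with the stated mesh condition, $\norw{\vn}{a_h,\pd}\le C_1\norw{\vn}{a_h,0}$, which controls the boundary energy of $\vn$ (and then of $\vc$ by your triangle inequality) directly; but as written, your mechanism for generating $\norw{\vn}{a_h,\pd}^2$ from the pairing with $\bar v^0$ does not work.
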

\begin{proof}
For $v \in \Vh$ let $v = \vc + \vn$ for $\vc \in \Vhc$ and $\vn \in \Vhnn$. Suppose that $\bar{v}^0$ and $\vtiln$ are defined as in Lemma~\ref{lemma:vwtil}. 
By \eqref{eq:wtil-estm}, \eqref{eq:vwtil-estm}, \eqref{eq:vwtil-estm2}, and \eqref{eq:vn-bdy-vn-int} there exists $C_1$ independent of $h$ satisfying 
\algn{ \label{eq:aux-ineq1}
\norw{\bar{v}^0}{a_h,\pd}, \norw{\vtiln}{a_h, 0}, \norw{\vtiln}{a_h, \pd}, \norw{\vn}{a_h, \pd} \le C_1 \norw{\vn}{a_h, 0} .
}
We take 
\algns{
w := \wc + \wn, \qquad \wc := \vc, \quad  \wn := \delta \vtiln = \vn + \delta (\vn + \bar{v}^0)
}
with $\delta>0$ which will be determined later. From the definition \eqref{eq:tnorm} and \eqref{eq:wtil-estm}, it holds that 
\algn{ \label{eq:w-estm}
\tnorm{ (\wc, \wn)}^2 = \tnorm{(\vc, \vn + \delta \vtiln )}^2 = \tnorm{\vc}^2 + \tnorm{v + \delta \vtiln}^2 \leq C \tnorm{(\vc, \vn)}^2 
}
with $C$ which is uniformly bounded for $\delta \le 1$.

From the definition of $w$ and the coercivity of $a_h$
\algns{
a_h(v,w) &= a_h(v, v) + \delta a_h(v, \vtiln) \\
&\ge \Cc \LRp{ \norw{ \vc}{a_h,0}^2 + \norw{\vn}{a_h,0}^2 + \norw{\vn + \vc}{a_h,\pd}^2 } \\
%\LRa{ \gamma_1 h_e^{-1- 2 \theta k} \kap (\vn + \vc), \vn + \vc }_{\mc{E}_h^{\pd}} } \\
&\quad - \delta \LRa{ \avg{\nabla \vc}, \jump{\vtiln}}_{\Eh^0 \cup \Eh^D } + \delta \LRa{ \gammakap h_e^{-1-\alpha } \jump{\vn}, \jump{\vtiln} }_{\mc{E}_h^0} \\
&\quad + \delta \LRa{ \gammakap h_e^{-1 } (\vn + \vc), \vtiln }_{\mc{E}_h^{D}} \\
&= \Cc \tnorm{(\vc, \vn)}^2 + 2 \Cc \LRa{\gamma h_e^{-1} \vc, \vn }_{\EhD} \\
&\quad - \delta \LRa{ \avg{\nabla \vc}, \jump{\vtiln }}_{\Eh^0 \cup \Eh^D } + \delta \LRa{ \gammakap h_e^{-1-\alpha } \jump{\vn}, \jump{\vtiln} }_{\mc{E}_h^0} \\
&\quad + \delta \LRa{ \gammakap h_e^{-1 } \vn, \vtiln }_{\mc{E}_h^{D}} + \delta \LRa{ \gammakap h_e^{-1 } \vc, \vn + \bar{v}^0}_{\mc{E}_h^{D}} \qquad (\text{because } \vtiln = \vn + \bar{v}^0)\\
&= \Cc \tnorm{(\vc, \vn)}^2 + (2 \Cc + \delta) \LRa{ \gammakap h_e^{-1} \vc, \vn }_{\mc{E}_h^{D}} + \delta \LRa{ \gammakap h_e^{-1} \vc, \bar{v}^0}_{\mc{E}_h^{D}} \\
&\quad - \delta \LRa{ \avg{\nabla \vc}, \jump{\vtiln}}_{\Eh^0 \cup \Eh^D} + \delta \LRa{ \gammakap h_e^{-1 -\alpha} \jump{\vn}, \jump{\vtiln} }_{\mc{E}_h^0} + \delta \LRa{ \gammakap  h_e^{-1} \vn , \vtiln }_{\mc{E}_h^{D}} \\
&=: \Cc \tnorm{(\vc, \vn)}^2 + (2 \Cc + \delta) I_1 + \delta (I_2 + I_3 + I_4 + I_5) 
}
with 
\algns{
  I_1 &:= \LRa{ \gammakap h_e^{-1} \vc, \vn }_{\mc{E}_h^{D}} & I_2 &:= \LRa{ \gammakap h_e^{-1} \vc, \bar{v}^0}_{\mc{E}_h^{D}} & I_3 &:= \LRa{ \avg{\nabla \vc}, \jump{\vtiln}}_{\Eh^0 \cup \Eh^D}, \\
  I_4 &:= \LRa{ \gammakap h_e^{-1 -\alpha } \jump{\vn}, \jump{\vtiln} }_{\mc{E}_h^0} & I_5 &:= \LRa{ \gammakap  h_e^{-1} \vn , \vtiln }_{\mc{E}_h^{D}} .
}
For simplicity we introduce additional notations 
\algns{
\Xc = \norw{\vc}{a_h, \pd}, \quad 
\Xn = \norw{\vc}{a_h, 0}, \quad 
\Yc = \norw{\vn}{a_h, \pd}, \quad 
\Yn = \norw{\vn}{a_h, 0} .
}
By the Cauchy--Schwarz inequality
\algns{
  |I_1| &\le \Xc \Yc \le C_1 \Xc \Yn, \quad \text{(by \eqref{eq:aux-ineq1})} \\
  |I_2| &\le \Xc \norw{\bar{v}^0}{a_h,\pd} \le C_1 \Xc \Yn , \quad \text{(by \eqref{eq:aux-ineq1})} \\
  |I_3| &\le C_2 \Xc (\Yc + \Yn) \le C_2 (1 + C_1) \Xc \Yn , \quad \text{(by \eqref{eq:cross-bound} and \eqref{eq:aux-ineq1}) } \\
  |I_4| &\le C_2 \Yn^2 . \quad \text{(by \eqref{eq:aux-ineq1})} 
}
Moreover, 
\algns{
I_5 = \Yc^2 + \LRa{\gamma h_e^{-1} \vn, \bar{v}^0 }_{\EhD} \ge \Yc^2 - C_1 \Yc \Yn . \quad \text{(by \eqref{eq:aux-ineq1})} 
}
If we use these inequalities to the previous form of $a_h(v, w)$, then 
\algns{
a_h(v, w) &\ge \Cc (\Xc^2 + \Xn^2 + \Yc^2 + \Yn^2 - 2 \Xc \Yc) \\
&\quad - \delta \LRp{ (3 C_1 + C_1 C_2) \Xc \Yn + C_2 \Yn^2 } + \delta \Yn^2 %- \delta C_1 \Yc \Yn 
}
By Young's inequality we can have 
\algns{
a_h(v, w) &\ge \Cc (\Xc^2 + \Xn^2 + \Yc^2 + \Yn^2 - 2 \Xc \Yc ) \\
&\quad - \delta^2 C_3 \Xc^2 - \frac {\Cc}4 \Yn^2 - \delta C_2 \Yn^2 + \delta \Yc^2 
}
with $C_3 >0$ depending on $C_1$, $C_2$, and $\Cc$. If we use 
\algns{
  2 \Xc \Yc \le \frac{1}{1 + \frac{\delta}2} \Xc^2 + \LRp{1 + \frac{\delta}2} \Yc^2, 
}
we can obtain
\algns{
a_h(v, w) &\ge \LRp{ \Cc \frac{\delta}{2+\delta} - 9 \delta^2 C_1^2} \Xc^2 + \Cc \Xn^2 + \frac{\Cc \delta}2 \Yc^2 + \frac {3\Cc}4  \Yn^2 .
}
If we choose $\delta$ sufficiently small, then 
\algns{
a_h(v, w) \ge C \tnorm{(\vc, \vn)}^2 .
}
The conclusion follows by combining it with \eqref{eq:w-estm}. 
\end{proof}

To construct preconditioners recall that $\tilde{{\mathcal{A}_h}}$ is given by the bilinear form 
\algn{ \label{eq:block-diag}
a_h (\vc, \wc) + a_h (\vn, \wn) , \qquad v, w \in \Vh ,
}
and its matrix form is 
\algn{ \label{eq:block-diag-mat}
\pmat{ \M_c & 0 \\
0 & \M_0
}
}
where $\M_c$ and $\M_0$ are matrices obtained from $a_h (\vc, \wc)$ and $a_h (\vn, \wn)$, respectively. 

To construct preconditioners in practice, we use algebraic multigrid methods to obtain 
an approximate inverse of this block diagonal matrix.
It is known that algebraic multigrid methods give good preconditioners for $\M_c$.
Since $\M_0$ is a weakly diagonally dominant matrix \cite{Lee-Lee-Wheeler-2016}, 
standard algebraic multigrid methods give an efficient preconditioner for this as well.
Therefore it is feasible to use a preconditioner of the form
%instead of the direct inverse \eqref{eq:block-diag-mat}, 
\algn{ \label{eq:block-diag-mat-inv}
\pmat{ \text{AMG}(\M_c) & 0 \\
0 & \text{AMG}(\M_0)
} ,
}
where $\text{AMG}(\M)$ is a preconditioner of $\M$ constructed by algebraic multigrid methods.
We will see in the next section that this form of preconditioner gives robust numerical results.

\section{Numerical results}
\label{sec:numerical}

In this section we present results of numerical experiments. 
In all numerical experiments, $\Omega = [0,1]\times [0,1] \subset \R^2$
and meshes consist of triangles that are bisections of $N \times N$ subsquares of $\Omega$ ($N=4,8,16,32,64,128$).
All numerical experiments are implemented with Firedrake \cite{firedrake}. 
\begin{table}[h]
	{\color{black}{
	\begin{center} \tiny 
		\begin{tabular}{c|c|c|cc|cc|cc|cc} \hline
			\multirow{2}{*}{$\kappa_0$} & \multirow{2}{*}{$k$} & \multirow{2}{*}{$h_{\max}$} & \multicolumn{2}{c|}{$ \| u - u_h \|_{0} $} & \multicolumn{2}{c|}{$ \| u - u_h \|_{a_h} $} & \multicolumn{2}{c|}{$ \tred{\| \zb - \zb_h \|_{\kap^{-1}}} $} & \multicolumn{2}{c}{$ \| P_0(f - \div \zb_h) \|_0 $} \\
			 & & & error & rate & error & rate & error & rate & error & rate \\ \hline \hline 
			\multirow{12}{*}{1} & \multirow{6}{*}{1}  & 1/{4}  &  1.7308e-02  &   --  &  2.3161e-01  &   --  &  1.8076e-01  &   --  &  6.7771e-06  &   --    \\ 
                                & & 1/{8}  &  5.1275e-03  &  1.76 &  1.1741e-01  &  0.98 &  8.2400e-02  &  1.13 &  2.5351e-07  &  4.74   \\ 
                                & & 1/{16} &  1.3562e-03  &  1.92 &  5.8387e-02  &  1.01 &  3.7214e-02  &  1.15 &  9.4939e-10  &  8.06   \\ 
                                & & 1/{32} &  3.4623e-04  &  1.97 &  2.9030e-02  &  1.01 &  1.7672e-02  &  1.07 &  1.7626e-11  &  5.75   \\ 
                                & & 1/{64} &  8.7325e-05  &  1.99 &  1.4464e-02  &  1.01 &  8.6656e-03  &  1.03 &  7.8165e-13  &  4.50   \\ 
                                & & 1/{128}&  2.1919e-05  &  1.99 &  7.2184e-03  &  1.00 &  4.3051e-03  &  1.01 &  2.6110e-12  &  -1.74  \\ 
\cline{2-11} 
			& \multirow{6}{*}{2} &      1/{4}  &  9.4928e-04  &   --  &  3.2761e-02  &   --  &  2.2220e-02  &   --  &  3.4991e-06  &   --    \\ 
                                    & & 1/{8}  &  1.2497e-04  &  2.93 &  8.3853e-03  &  1.97 &  5.7538e-03  &  1.95 &  7.1763e-08  &  5.61 \\ 
                                    & & 1/{16} &  1.6095e-05  &  2.96 &  2.1117e-03  &  1.99 &  1.4424e-03  &  2.00 &  7.7163e-09  &  3.22 \\ 
                                    & & 1/{32} &  2.0441e-06  &  2.98 &  5.2921e-04  &  2.00 &  3.5990e-04  &  2.00 &  7.6639e-09  &  0.01 \\ 
                                    & & 1/{64} &  2.5760e-07  &  2.99 &  1.3242e-04  &  2.00 &  8.9842e-05  &  2.00 &  2.1702e-12  &  11.79\\ 
                                    & & 1/{128}&  3.2332e-08  &  2.99 &  3.3119e-05  &  2.00 &  2.2443e-05  &  2.00 &  5.6520e-12  &  -1.38\\ 
\hline		
			\multirow{12}{*}{10} & \multirow{6}{*}{1}  & 1/{4}  &  1.7137e-02  &   --  &  2.4350e-01  &   --  &  1.4273e+00  &   --  &  9.5339e-06  &   --    \\
                                & & 1/{8}  &  5.0961e-03  &  1.75 &  1.1878e-01  &  1.04 &  7.1024e-01  &  1.01 &  2.1197e-06  &  2.17   \\
                                & & 1/{16} &  1.3477e-03  &  1.92 &  5.8490e-02  &  1.02 &  3.4157e-01  &  1.06 &  2.6746e-09  &  9.63  \\
                                & & 1/{32} &  3.4400e-04  &  1.97 &  2.9036e-02  &  1.01 &  1.6712e-01  &  1.03 &  6.7721e-11  &  5.30  \\
                                & & 1/{64} &  8.6762e-05  &  1.99 &  1.4465e-02  &  1.01 &  8.2866e-02  &  1.01 &  3.5851e-12  &  4.24  \\
                                & & 1/{128}&  2.1779e-05  &  1.99 &  7.2184e-03  &  1.00 &  4.1319e-02  &  1.00 &  1.7368e-11  &  -2.28 \\
\cline{2-11}
			& \multirow{6}{*}{2}  & 1/{4}  &  9.2460e-04  &   --  &  3.4138e-02  &   --  &  1.8100e-01  &   --  &  1.3883e-04  &   --    \\
                                & & 1/{8}  &  1.2355e-04  &  2.90 &  8.5794e-03  &  1.99 &  4.6441e-02  &  1.96 &  1.8428e-07  &  9.56   \\
                                & & 1/{16} &  1.6029e-05  &  2.95 &  2.1321e-03  &  2.01 &  1.1725e-02  &  1.99 &  4.1800e-09  &  5.46  \\
                                & & 1/{32} &  2.0410e-06  &  2.97 &  5.3134e-04  &  2.00 &  2.9323e-03  &  2.00 &  3.8990e-11  &  6.74  \\
                                & & 1/{64} &  2.5744e-07  &  2.99 &  1.3266e-04  &  2.00 &  7.3234e-04  &  2.00 &  1.4930e-10  &  -1.94 \\
                                & & 1/{128}&  3.2324e-08  &  2.99 &  3.3146e-05  &  2.00 &  1.8294e-04  &  2.00 &  4.1893e-11  &  1.83  \\
\hline				\end{tabular} 
	\end{center}
	} }
	\caption{Convergence with {\color{black}{IOP-EG}} ($\alpha=1$)}
	\label{table:quadratic-conv}
 \end{table}
\begin{table}[h]
	{\color{black}{
	\begin{center} \tiny 
		\begin{tabular}{c|c|c|cc|cc|cc|cc} \hline
			\multirow{2}{*}{$\kappa_0$} & \multirow{2}{*}{$k$} & \multirow{2}{*}{$h_{\max}$} & \multicolumn{2}{c|}{$ \| u - u_h \|_{0} $} & \multicolumn{2}{c|}{$ \| u - u_h \|_{a_h} $} & \multicolumn{2}{c|}{$ \| \zb - \zb_h \|_{\kap^{-1}}$} & \multicolumn{2}{c}{$ \| P_0(f - \div \zb_h) \|_0 $} \\
			 & & & error & rate & error & rate & error & rate & error & rate \\ \hline \hline 
			\multirow{12}{*}{1} & \multirow{6}{*}{1}  & 1/{4}  &  1.7401e-02  &   --  &  2.3188e-01  &   --  &  1.8320e-01  &   --  &  6.8532e-06  &   --    \\ 
                                & & 1/{8}  &  5.1355e-03  &  1.76 &  1.1747e-01  &  0.98 &  8.2788e-02  &  1.15 &  1.8947e-07  &  5.18   \\ 
                                & & 1/{16} &  1.3566e-03  &  1.92 &  5.8393e-02  &  1.01 &  3.7252e-02  &  1.15 &  7.5461e-10  &  7.97  \\ 
                                & & 1/{32} &  3.4625e-04  &  1.97 &  2.9030e-02  &  1.01 &  1.7675e-02  &  1.08 &  1.6568e-10  &  2.19  \\ 
                                & & 1/{64} &  8.7325e-05  &  1.99 &  1.4464e-02  &  1.01 &  8.6658e-03  &  1.03 &  8.2004e-13  &  7.66  \\ 
                                & & 1/{128}&  2.1919e-05  &  1.99 &  7.2184e-03  &  1.00 &  4.3051e-03  &  1.01 &  3.2178e-12  &  -1.97 \\ 
\cline{2-11} 
			& \multirow{6}{*}{2}  & 1/{4}  &  9.5448e-04  &   --  &  3.2725e-02  &   --  &  2.2288e-02  &   --  &  8.6195e-06  &   --    \\ 
                                & & 1/{8}  &  1.2558e-04  &  2.93 &  8.3760e-03  &  1.97 &  5.7739e-03  &  1.95 &  2.4908e-07  &  5.11   \\ 
                                & & 1/{16} &  1.6143e-05  &  2.96 &  2.1101e-03  &  1.99 &  1.4448e-03  &  2.00 &  7.6211e-09  &  5.03  \\ 
                                & & 1/{32} &  2.0474e-06  &  2.98 &  5.2900e-04  &  2.00 &  3.6016e-04  &  2.00 &  4.2024e-11  &  7.50  \\ 
                                & & 1/{64} &  2.5781e-07  &  2.99 &  1.3240e-04  &  2.00 &  8.9872e-05  &  2.00 &  1.7440e-12  &  4.59  \\ 
                                & & 1/{128}&  3.2346e-08  &  2.99 &  3.3115e-05  &  2.00 &  2.2446e-05  &  2.00 &  6.0409e-12  &  -1.79 \\ 
\hline		
			\multirow{12}{*}{10} & \multirow{6}{*}{1}  & 1/{4}  &  1.7247e-02  &   --  &  2.4384e-01  &   --  &  1.4472e+00  &   --  &  1.0088e-05  &   --   \\
                                & & 1/{8}  &  5.1065e-03  &  1.76 &  1.1880e-01  &  1.04 &  7.1414e-01  &  1.02 &  4.3373e-07  &  4.54  \\
                                & & 1/{16} &  1.3482e-03  &  1.92 &  5.8491e-02  &  1.02 &  3.4198e-01  &  1.06 &  2.0379e-08  &  4.41 \\
                                & & 1/{32} &  3.4403e-04  &  1.97 &  2.9036e-02  &  1.01 &  1.6715e-01  &  1.03 &  1.7823e-10  &  6.84 \\
                                & & 1/{64} &  8.6763e-05  &  1.99 &  1.4465e-02  &  1.01 &  8.2869e-02  &  1.01 &  1.4241e-11  &  3.65 \\
                                & & 1/{128}&  2.1779e-05  &  1.99 &  7.2184e-03  &  1.00 &  4.1319e-02  &  1.00 &  1.2476e-11  &  0.19 \\
\cline{2-11}
			& \multirow{6}{*}{2}  & 1/{4}  &  9.3036e-04  &   --  &  3.4126e-02  &   --  &  1.8096e-01  &   --  &  4.6005e-05  &   --    \\
                                & & 1/{8}  &  1.2421e-04  &  2.90 &  8.5715e-03  &  1.99 &  4.6526e-02  &  1.96 &  4.2893e-07  &  6.74   \\
                                & & 1/{16} &  1.6080e-05  &  2.95 &  2.1306e-03  &  2.01 &  1.1734e-02  &  1.99 &  6.3984e-09  &  6.07  \\
                                & & 1/{32} &  2.0444e-06  &  2.98 &  5.3112e-04  &  2.00 &  2.9330e-03  &  2.00 &  1.2150e-10  &  5.72  \\
                                & & 1/{64} &  2.5766e-07  &  2.99 &  1.3263e-04  &  2.00 &  7.3240e-04  &  2.00 &  4.0834e-11  &  1.57  \\
                                & & 1/{128}&  3.2338e-08  &  2.99 &  3.3142e-05  &  2.00 &  1.8295e-04  &  2.00 &  4.3021e-11  &  -0.08 \\
\hline		
		\end{tabular} 
	\end{center}  
	} }
	\caption{Convergence with {\color{black}{IOP-EG}} ($\alpha = 2$)}
	\label{table:cubic-conv}
\end{table}

%%%%%%%%%%%%%%%%%%%%%%%%%%%%%%%%%%%%%%%%%%%%%%%%%%%%%%%%%%%%%%%%%%%%%%
\begin{comment}
%\input{code/results/iteration_result_01.tex}
% EG precond test 
%% tensor [[k00 k01], [k01, k11]] = [[1.0,0],[0,1.0]]
%% method = eg
\begin{table}[h]
	\begin{center}
		\begin{tabular}{c|c||c c c c c} 
			\multicolumn{2}{c||}{ } & \multicolumn{5}{c}{$N$} \\ 
			$k$ &$\ \kappa_0$ &$8$ &$16$ &$32$ &$64$ &$128$ \\ 
			\hline 
			\multirow{5}{*}{$1$}&$1$ &$27~(0.99)$ &$36~(0.16)$ &$48~(0.11)$ &$64~(0.45)$ &$86~(2.78)$ \\ 
			 & $ 2$ &$29~(0.05)$ &$37~(0.05)$ &$50~(0.11)$ &$67~(0.47)$ &$91~(2.91)$ \\ 
			&$4$ &$29~(0.05)$ &$40~(0.06)$ &$54~(0.12)$ &$73~(0.49)$ &$99~(3.15)$ \\ 
			&$8$ &$30~(0.05)$ &$43~(0.06)$ &$59~(0.12)$ &$81~(0.53)$ &$110~(3.41)$ \\ 
			&$10$ &$30~(0.05)$ &$43~(0.06)$ &$59~(0.12)$ &$83~(0.54)$ &$115~(3.54)$ \\ 
			\cline{2-7} 
			\multirow{5}{*}{$2$} &$1$ &$38~(1.42)$ &$45~(0.07)$ &$56~(0.23)$ &$73~(1.42)$ &$95~(7.81)$ \\ 
			& $ 2$ &$40~(0.05)$ &$47~(0.09)$ &$59~(0.23)$ &$76~(1.48)$ &$101~(8.19)$ \\ 
			&$4$ &$42~(0.05)$ &$51~(0.07)$ &$63~(0.24)$ &$83~(1.56)$ &$110~(8.65)$ \\ 
			&$8$ &$45~(0.05)$ &$55~(0.07)$ &$70~(0.25)$ &$93~(1.68)$ &$124~(9.43)$ \\ 
			&$10$ &$46~(0.05)$ &$56~(0.07)$ &$72~(0.26)$ &$97~(1.71)$ &$129~(9.6)$ \\ 
			\hline 
		\end{tabular} 
		\caption{Number of iterations and wall clock solve time for $\alpha = 0$ (the original enriched Galerkin methods)} 
		\label{table:eg} 
	\end{center} 
\end{table} 

\end{comment}
%%%%%%%%%%%%%%%%%%%%%%%%%%%%%%%%%%%%%%%%%%%%%%%%%%%%%%%%%%%%%%%%%%%%%%

{\color{black}{ 
%New eg:
%\input{code/results-08-11-2023/iteration_result_iop-eg-precond-exp-fd-08-07-2023.py-eg.tex}
% EG precond test 
%% tensor [[k00 k01], [k01, k11]] = [[1.0,0],[0,1.0]]
%% method = eg
\begin{table}[h] \color{black}{ \tiny
	\begin{center}
		\begin{tabular}{c|c||c c c c c} 
			\multicolumn{2}{c||}{ } & \multicolumn{5}{c}{$N$} \\ 
			$k$ &$\ \kappa_0$ &$8$ &$16$ &$32$ &$64$ &$128$ \\ 
			\hline 
			\multirow{5}{*}{$1$}&$1$ &$27$ &$36$ &$48$ &$64$ &$86$ \\ 
								&$2$ &$29$ &$37$ &$50$ &$67$ &$91$ \\ 
								&$4$ &$29$ &$40$ &$54$ &$73$ &$99$ \\ 
								&$8$ &$30$ &$43$ &$59$ &$81$ &$110$ \\ 
								&$10$&$30$ &$43$ &$59$ &$83$ &$115$ \\ 
			\cline{2-7} 
			\multirow{5}{*}{$2$}&$1$ &$38$ &$45$ &$56$ &$73$ &$95$ \\ 
								&$2$ &$40$ &$47$ &$59$ &$76$ &$101$ \\ 
								&$4$ &$42$ &$51$ &$63$ &$83$ &$110$ \\ 
								&$8$ &$45$ &$55$ &$70$ &$93$ &$124$ \\ 
								&$10$&$46$ &$57$ &$72$ &$97$ &$129$ \\ 
			\hline 
		\end{tabular} 
		\caption{Number of iterations for $\alpha = 0$} 
		\label{table:eg} 
	\end{center} }
\end{table} 
}}

\begin{comment}
%\input{code/results/iteration_result_02.tex}
% EG precond test 
%% tensor [[k00 k01], [k01, k11]] = [[1.0,0],[0,1.0]]
%% method = iop-eg
\begin{table}[h]
	\begin{center}
		\begin{tabular}{c|c||c c c c c} 
			\multicolumn{2}{c||}{ } & \multicolumn{5}{c}{$N$} \\ 
			$k$ &$\ \kappa_0$ &$8$ &$16$ &$32$ &$64$ &$128$ \\ 
			\hline 
			\multirow{5}{*}{$1$} &$1$ &$17~(1.02)$ &$18~(0.15)$ &$19~(0.08)$ &$19~(0.27)$ &$18~(1.16)$ \\ 
			& $ 2$ &$18~(0.04)$ &$19~(0.05)$ &$19~(0.08)$ &$19~(0.27)$ &$19~(1.18)$ \\ 
			&$4$ &$19~(0.04)$ &$20~(0.05)$ &$20~(0.08)$ &$19~(0.27)$ &$19~(1.18)$ \\ 
			&$8$ &$20~(0.04)$ &$21~(0.05)$ &$21~(0.08)$ &$21~(0.28)$ &$20~(1.2)$ \\ 
			&$10$ &$20~(0.05)$ &$21~(0.05)$ &$21~(0.08)$ &$21~(0.28)$ &$21~(1.23)$ \\ 
			\cline{2-7} 
			\multirow{5}{*}{$2$} &$1$ &$31~(1.46)$ &$32~(0.06)$ &$31~(0.19)$ &$29~(0.87)$ &$28~(3.95)$ \\ 
			& $ 2$ &$32~(0.05)$ &$33~(0.08)$ &$33~(0.19)$ &$32~(0.92)$ &$30~(4.06)$ \\ 
			&$4$ &$33~(0.05)$ &$34~(0.06)$ &$34~(0.19)$ &$33~(0.92)$ &$32~(4.16)$ \\ 
			&$8$ &$34~(0.05)$ &$35~(0.06)$ &$35~(0.19)$ &$34~(0.93)$ &$33~(4.2)$ \\ 
			&$10$ &$35~(0.05)$ &$36~(0.06)$ &$35~(0.21)$ &$34~(0.94)$ &$34~(4.21)$ \\ 
			\hline 
		\end{tabular} 
		\caption{Number of iterations and wall clock solve time for $\alpha = 1$} 
		\label{table:quadratic} 
	\end{center} 
\end{table} 
\end{comment}

{\color{black}{ 
%New iop-eg:
%\input{code/results-08-11-2023/iteration_result_iop-eg-precond-exp-fd-08-07-2023.py-iop-eg.tex}
% EG precond test 
%% tensor [[k00 k01], [k01, k11]] = [[1.0,0],[0,1.0]]
%% method = iop-eg
\begin{table}[h]
	\color{black}{ \tiny
	\begin{center}
		\begin{tabular}{c|c||c c c c c} 
			\multicolumn{2}{c||}{ } & \multicolumn{5}{c}{$N$} \\ 
			$k$ &$\ \kappa_0$ &$8$ &$16$ &$32$ &$64$ &$128$ \\ 
			\hline 
			\multirow{5}{*}{$1$}&$1$ &$17$ &$18$ &$19$ &$19$ &$18$ \\ 
								&$2$ &$18$ &$19$ &$19$ &$19$ &$19$ \\ 
								&$4$ &$19$ &$20$ &$20$ &$19$ &$19$ \\ 
								&$8$ &$20$ &$21$ &$21$ &$21$ &$20$ \\ 
								&$10$&$20$ &$21$ &$21$ &$21$ &$21$ \\ 
			\cline{2-7} 
			\multirow{5}{*}{$2$}&$1$ &$31$ &$32$ &$31$ &$30$ &$28$ \\ 
								&$2$ &$32$ &$33$ &$33$ &$32$ &$30$ \\ 
								&$4$ &$33$ &$34$ &$34$ &$32$ &$32$ \\ 
								&$8$ &$34$ &$35$ &$35$ &$34$ &$33$ \\ 
								&$10$&$35$ &$36$ &$35$ &$34$ &$33$ \\ 
			\hline 
		\end{tabular} 
		\caption{Number of iterations for $\alpha = 1$ 
		} 
		\label{table:quadratic} 
	\end{center} 
	}
\end{table} 
}
}

%%%%%%%%%%%%%%%%%%%%%%%%%%%%%%%%%%%%%%%%%%%%%%%%%%%%%%%%%%%%%%%%%%%%%%
\begin{comment}
%\input{code/results/iteration_result_03.tex}
% EG precond test 
%% tensor [[k00 k01], [k01, k11]] = [[1.0,0],[0,1.0]]
%% method = cubic
\begin{table}[h]
	\begin{center}
		\begin{tabular}{c|c||c c c c c} 
			\multicolumn{2}{c||}{ } & \multicolumn{5}{c}{$N$} \\ 
			$k$ &$\ \kappa_0$ &$8$ &$16$ &$32$ &$64$ &$128$ \\ 
			\hline 
			\multirow{5}{*}{$1$}&$1$ &$15~(1.03)$ &$15~(0.15)$ &$14~(0.07)$ &$14~(0.25)$ &$13~(1.04)$ \\ 
			 & $ 2$ &$15~(0.04)$ &$15~(0.05)$ &$14~(0.07)$ &$14~(0.25)$ &$14~(1.06)$ \\ 
			&$4$ &$16~(0.04)$ &$15~(0.05)$ &$15~(0.07)$ &$14~(0.25)$ &$14~(1.06)$ \\ 
			&$8$ &$16~(0.04)$ &$15~(0.05)$ &$15~(0.07)$ &$15~(0.25)$ &$14~(1.06)$ \\ 
			&$10$ &$16~(0.04)$ &$16~(0.05)$ &$15~(0.07)$ &$15~(0.25)$ &$14~(1.06)$ \\ 
			\cline{2-7} 
			\multirow{5}{*}{$2$}&$1$ &$28~(1.48)$ &$29~(0.06)$ &$29~(0.18)$ &$27~(0.84)$ &$25~(3.76)$ \\ 
			 & $ 2$ &$29~(0.05)$ &$30~(0.08)$ &$29~(0.18)$ &$29~(0.87)$ &$28~(3.93)$ \\ 
			&$4$ &$31~(0.05)$ &$31~(0.06)$ &$30~(0.18)$ &$29~(0.86)$ &$29~(3.96)$ \\ 
			&$8$ &$31~(0.05)$ &$31~(0.06)$ &$31~(0.18)$ &$30~(0.88)$ &$29~(3.97)$ \\ 
			&$10$ &$32~(0.05)$ &$31~(0.06)$ &$31~(0.2)$ &$30~(0.88)$ &$29~(3.92)$ \\ 
			\hline 
		\end{tabular} 
		\caption{Number of iterations and wall clock solve time for $\alpha = 2$} 
		\label{table:cubic} 
	\end{center} 
\end{table} 
\end{comment}
%%%%%%%%%%%%%%%%%%%%%%%%%%%%%%%%%%%%%%%%%%%%%%%%%%%%%%%%%%%%%%%%%%%%%%

{\color{black}{ 
%New cubic:
%\input{code/results-08-11-2023/iteration_result_iop-eg-precond-exp-fd-08-07-2023.py-cubic.tex}
% EG precond test 
%% tensor [[k00 k01], [k01, k11]] = [[1.0,0],[0,1.0]]
%% method = cubic
\begin{table}[h]
	\color{black}{ \tiny
	\begin{center}
		\begin{tabular}{c|c||c c c c c} 
			\multicolumn{2}{c||}{ } & \multicolumn{5}{c}{$N$} \\ 
			$k$ &$\ \kappa_0$ &$8$ &$16$ &$32$ &$64$ &$128$ \\ 
			\hline 
			\multirow{5}{*}{$1$}&$1$ &$15$ &$15$ &$14$ &$14$ &$13$ \\ 
								&$2$ &$15$ &$15$ &$14$ &$14$ &$14$ \\ 
								&$4$ &$16$ &$15$ &$15$ &$14$ &$14$ \\ 
								&$8$ &$16$ &$15$ &$15$ &$15$ &$14$ \\ 
								&$10$&$16$ &$16$ &$15$ &$15$ &$14$ \\ 
			\cline{2-7} 
			\multirow{5}{*}{$2$}&$1$ &$28$ &$29$ &$29$ &$27$ &$25$ \\ 
								&$2$ &$29$ &$30$ &$29$ &$29$ &$28$ \\ 
								&$4$ &$31$ &$31$ &$30$ &$29$ &$29$ \\ 
								&$8$ &$31$ &$31$ &$31$ &$30$ &$29$ \\ 
								&$10$&$32$ &$31$ &$31$ &$30$ &$29$ \\ 
			\hline
		\end{tabular} 
		\caption{Number of iterations for $\alpha = 2$ 
		} 
		\label{table:cubic} 
	\end{center} 
	}
\end{table} 
}
}

%\input{code/results/iteration_result_04.tex} -- omitted

%%%%%%%%%%%%%%%%%%%%%%%%%%%%%%%%%%%%%%%%%%%%%%%%%%%%%%%%%%%%%%%%%%%%%%
\begin{comment}
% large interior coefficient
%\input{code/results/iteration_result_05.tex}
% EG precond test 
%% tensor [[k00 k01], [k01, k11]] = [[1.0,0],[0,1.0]]
%% method = large
\begin{table}[h]
	\begin{center}
		\begin{tabular}{c|c||c c c c c} 
			\multicolumn{2}{c||}{ } & \multicolumn{5}{c}{$N$} \\ 
			$k$ &$\ \kappa_0$ &$8$ &$16$ &$32$ &$64$ &$128$ \\ 
			\hline 
			\multirow{5}{*}{$1$} &$1$ &$16~(1.02)$ &$19~(0.15)$ &$22~(0.08)$ &$26~(0.3)$ &$32~(1.49)$ \\ 
			& $ 2$ &$17~(0.04)$ &$20~(0.05)$ &$23~(0.08)$ &$27~(0.3)$ &$34~(1.53)$ \\ 
			&$4$ &$18~(0.04)$ &$20~(0.05)$ &$24~(0.08)$ &$29~(0.31)$ &$36~(1.59)$ \\ 
			&$8$ &$18~(0.04)$ &$22~(0.05)$ &$26~(0.09)$ &$32~(0.32)$ &$40~(1.67)$ \\ 
			&$10$ &$18~(0.05)$ &$22~(0.05)$ &$26~(0.09)$ &$33~(0.33)$ &$41~(1.7)$ \\ 
			\cline{2-7} 
			\multirow{5}{*}{$2$} &$1$ &$30~(1.51)$ &$32~(0.06)$ &$32~(0.19)$ &$35~(0.94)$ &$40~(4.64)$ \\ 
			& $ 2$ &$31~(0.05)$ &$33~(0.08)$ &$35~(0.19)$ &$38~(0.98)$ &$41~(4.71)$ \\ 
			&$4$ &$32~(0.05)$ &$34~(0.06)$ &$37~(0.19)$ &$39~(0.99)$ &$44~(4.83)$ \\ 
			&$8$ &$33~(0.05)$ &$36~(0.06)$ &$38~(0.2)$ &$42~(1.03)$ &$47~(5.0)$ \\ 
			&$10$ &$33~(0.05)$ &$36~(0.06)$ &$38~(0.2)$ &$43~(1.03)$ &$49~(5.07)$ \\ 
			\hline 
		\end{tabular} 
		\caption{Number of iterations and wall clock solve time for large $\gamma~(=100)$ on $\mathcal{E}_h^0$} 
		\label{table:large-gamma} 
	\end{center} 
\end{table} 
\end{comment}
%%%%%%%%%%%%%%%%%%%%%%%%%%%%%%%%%%%%%%%%%%%%%%%%%%%%%%%%%%%%%%%%%%%%%%

{\color{black}{ 
%New large:
%\input{code/results-08-11-2023/iteration_result_iop-eg-precond-exp-fd-08-07-2023.py-large.tex}
% EG precond test 
%% tensor [[k00 k01], [k01, k11]] = [[1.0,0],[0,1.0]]
%% method = large
\begin{table}[h]
	\color{black}{ \tiny
	\begin{center}
		\begin{tabular}{c|c||c c c c c} 
			\multicolumn{2}{c||}{ } & \multicolumn{5}{c}{$N$} \\ 
			$k$ &$\ \kappa_0$ &$8$ &$16$ &$32$ &$64$ &$128$ \\ 
			\hline 
			\multirow{5}{*}{$1$}&$1$ &$15$ &$17$ &$19$ &$22$ &$25$ \\ 
								&$2$ &$16$ &$17$ &$20$ &$23$ &$26$ \\ 
								&$4$ &$16$ &$18$ &$20$ &$23$ &$28$ \\ 
								&$8$ &$17$ &$19$ &$21$ &$25$ &$31$ \\ 
								&$10$&$17$ &$19$ &$22$ &$26$ &$32$ \\ 
			\cline{2-7} 
			\multirow{5}{*}{$2$}&$1$ &$29$ &$31$ &$31$ &$32$ &$35$ \\ 
								&$2$ &$30$ &$32$ &$33$ &$34$ &$37$ \\ 
								&$4$ &$31$ &$33$ &$34$ &$35$ &$38$ \\ 
								&$8$ &$32$ &$34$ &$35$ &$37$ &$41$ \\ 
								&$10$&$32$ &$34$ &$35$ &$37$ &$41$ \\ 
			\hline 
		\end{tabular} 
		\caption{Number of iterations for large $\gamma~(=200)$ on $\mathcal{E}_h^0$
		} 
		\label{table:large} 
	\end{center} 
	}
\end{table} 
}
}

%%%%%%%%%%%%%%%%%%%%%%%%%%%%%%%%%%%%%%%%%%%%%%%%%%%%%%%%%%%%%%%%%%%%%%
\begin{comment}
% 3/2 power of hcell
%\input{code/results/iteration_result_06.tex}
% EG precond test 
%% tensor [[k00 k01], [k01, k11]] = [[1.0,0],[0,1.0]]
%% method = onehalf
\begin{table}[h]
	\begin{center}
		\begin{tabular}{c|c||c c c c c} 
			\multicolumn{2}{c||}{ } & \multicolumn{5}{c}{$N$} \\ 
			$k$ &$\ \kappa_0$ &$8$ &$16$ &$32$ &$64$ &$128$ \\ 
			\hline 
			\multirow{5}{*}{$1$} &$1$ &$21~(1.02)$ &$23~(0.15)$ &$26~(0.09)$ &$29~(0.31)$ &$32~(1.48)$ \\ 
			& $ 2$ &$22~(0.04)$ &$25~(0.05)$ &$27~(0.09)$ &$31~(0.32)$ &$34~(1.53)$ \\ 
			&$4$ &$23~(0.04)$ &$27~(0.05)$ &$30~(0.09)$ &$33~(0.32)$ &$37~(1.6)$ \\ 
			&$8$ &$24~(0.05)$ &$29~(0.05)$ &$32~(0.09)$ &$36~(0.34)$ &$41~(1.69)$ \\ 
			&$10$ &$24~(0.05)$ &$29~(0.05)$ &$34~(0.1)$ &$38~(0.35)$ &$42~(1.73)$ \\ 
			\cline{2-7} 
			\multirow{5}{*}{$2$} &$1$ &$33~(1.5)$ &$35~(0.06)$ &$37~(0.2)$ &$39~(0.99)$ &$41~(4.7)$ \\ 
			& $ 2$ &$35~(0.05)$ &$37~(0.08)$ &$39~(0.2)$ &$40~(1.01)$ &$42~(4.75)$ \\ 
			&$4$ &$36~(0.05)$ &$39~(0.07)$ &$40~(0.2)$ &$42~(1.03)$ &$45~(4.89)$ \\ 
			&$8$ &$38~(0.05)$ &$41~(0.06)$ &$44~(0.21)$ &$46~(1.08)$ &$48~(5.03)$ \\ 
			&$10$ &$39~(0.05)$ &$41~(0.06)$ &$45~(0.21)$ &$47~(1.09)$ &$51~(5.17)$ \\ 
			\hline 
		\end{tabular} 
		\caption{Number of iterations and wall clock solve time for $\alpha = 0.5$} 
		\label{table:onehalf} 
	\end{center} 
\end{table} 
\end{comment}
%%%%%%%%%%%%%%%%%%%%%%%%%%%%%%%%%%%%%%%%%%%%%%%%%%%%%%%%%%%%%%%%%%%%%%

{\color{black}{ 
%New onehalf:
%\input{code/results-08-11-2023/iteration_result_iop-eg-precond-exp-fd-08-07-2023.py-onehalf.tex}
% EG precond test 
%% tensor [[k00 k01], [k01, k11]] = [[1.0,0],[0,1.0]]
%% method = onehalf
\begin{table}[h]
	\color{black}{ \tiny
	\begin{center}
		\begin{tabular}{c|c||c c c c c} 
			\multicolumn{2}{c||}{ } & \multicolumn{5}{c}{$N$} \\ 
			$k$ &$\ \kappa_0$ &$8$ &$16$ &$32$ &$64$ &$128$ \\ 
			\hline 
			\multirow{5}{*}{$1$}&$1$ &$21$ &$23$ &$26$ &$29$ &$32$ \\ 
								&$2$ &$22$ &$25$ &$27$ &$31$ &$34$ \\ 
								&$4$ &$23$ &$27$ &$30$ &$33$ &$37$ \\ 
								&$8$ &$24$ &$29$ &$32$ &$36$ &$41$ \\ 
								&$10$&$24$ &$29$ &$34$ &$38$ &$42$ \\ 
			\cline{2-7} 
			\multirow{5}{*}{$2$}&$1$ &$34$ &$35$ &$37$ &$39$ &$41$ \\ 
								&$2$ &$35$ &$37$ &$39$ &$40$ &$42$ \\ 
								&$4$ &$36$ &$39$ &$40$ &$42$ &$45$ \\ 
								&$8$ &$38$ &$41$ &$44$ &$46$ &$48$ \\ 
								&$10$&$39$ &$42$ &$45$ &$47$ &$51$ \\ 
			\hline 
		\end{tabular} 
		\caption{Number of iterations for $\alpha = 0.5$} 
		\label{table:onehalf} 
	\end{center} 
	}
\end{table} 
}
}

\begin{figure}[ht]  
	\hspace{-0.cm}
	\begin{subfigure}[b]{1.05\textwidth}        
		\includegraphics[width=.5\linewidth]{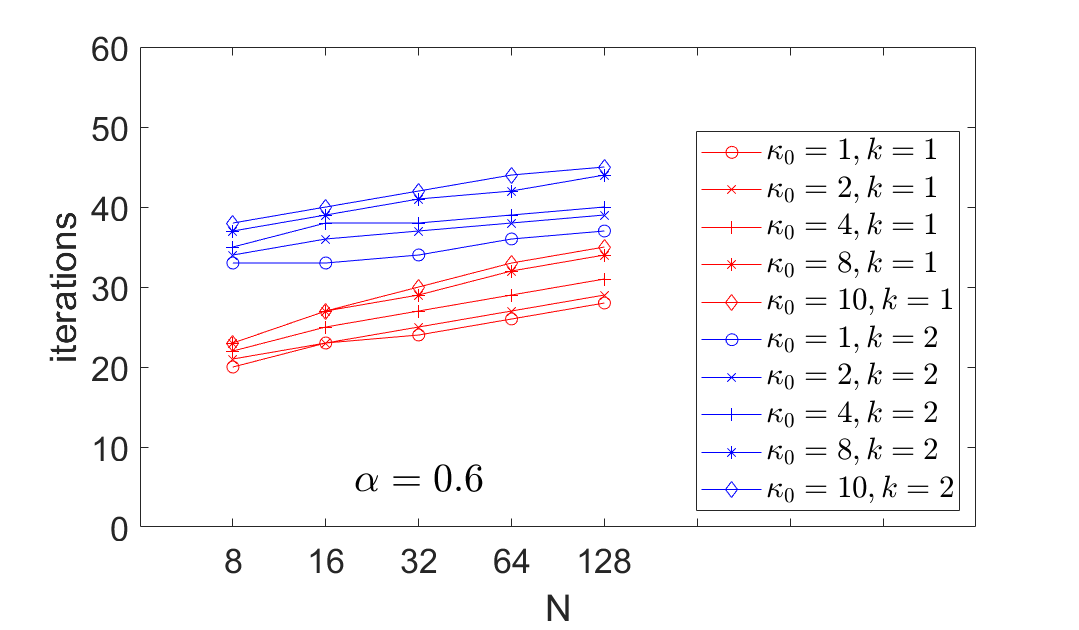} ~\hspace{-5mm}
		\includegraphics[width=.5\linewidth]{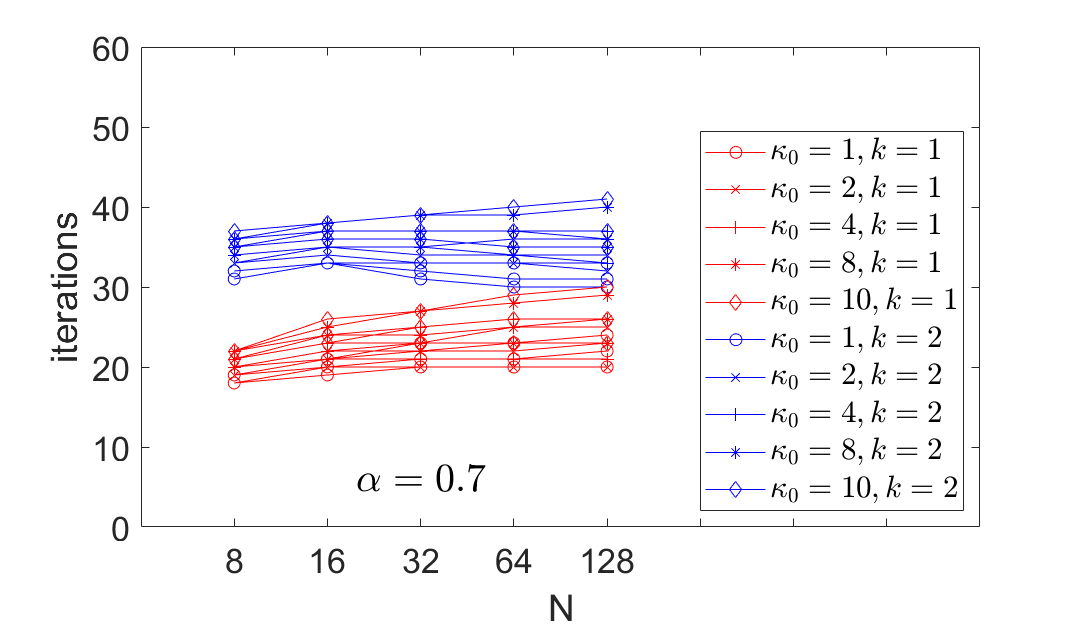} 
    \end{subfigure} 
    \\ %\\
    \vspace{5mm}
	\hspace{-0.cm}
	\begin{subfigure}[b]{1.05\textwidth}
		\hspace{-2mm}
		\includegraphics[width=.5\linewidth]{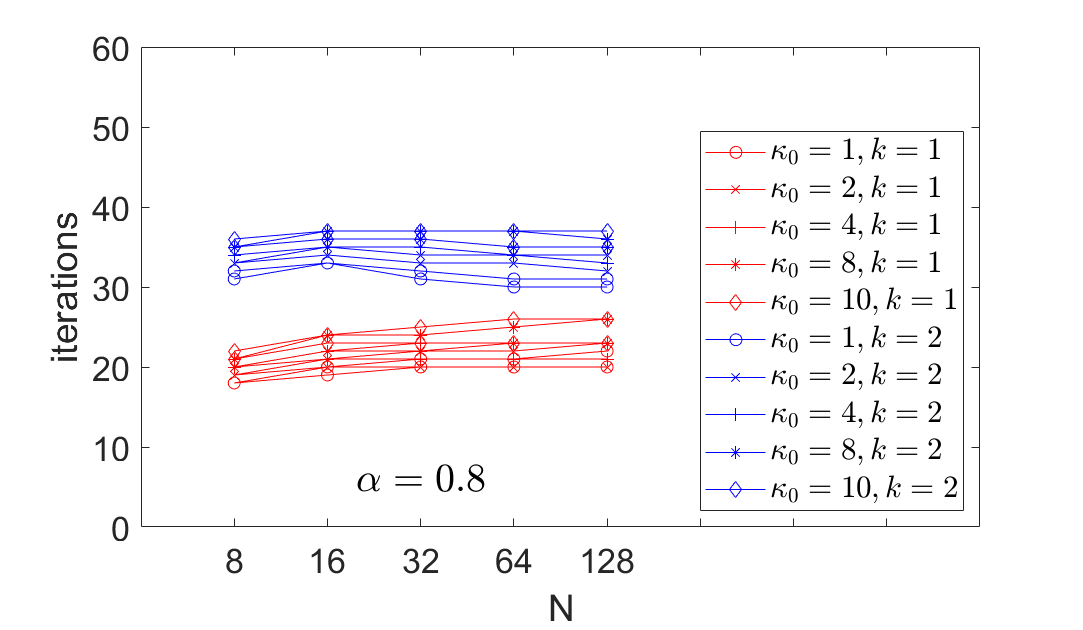} ~\hspace{-5mm}
		\includegraphics[width=.5\linewidth]{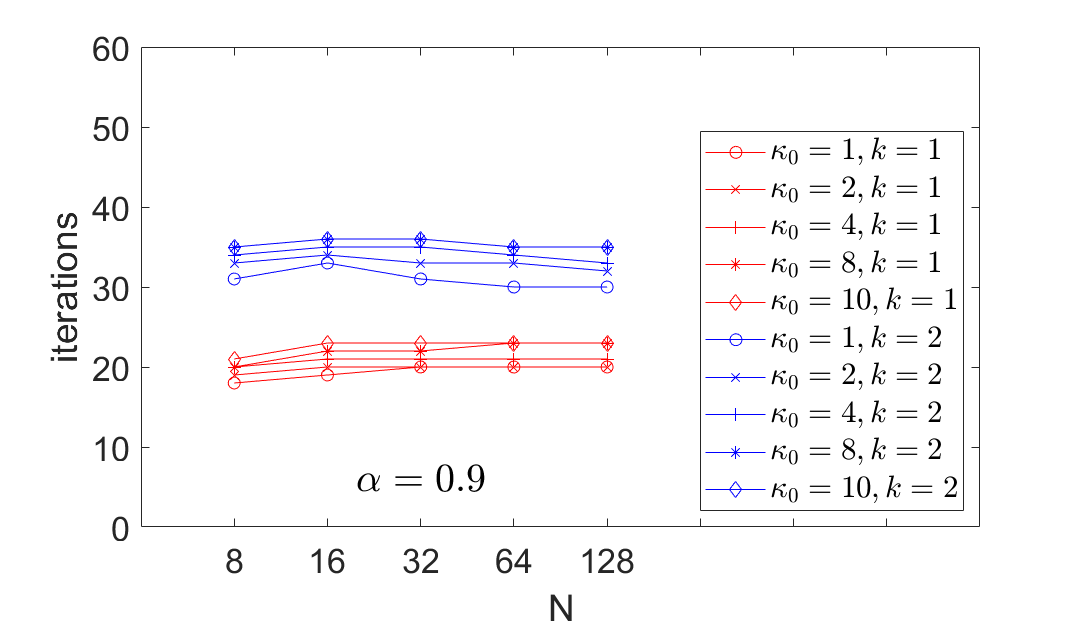} 
  	\end{subfigure}
    \\
    %\vspace{5mm}
	\hspace{-0.cm}
	\begin{subfigure}[b]{1.05\textwidth}
		\hspace{-0mm}
		\includegraphics[width=.5\linewidth]{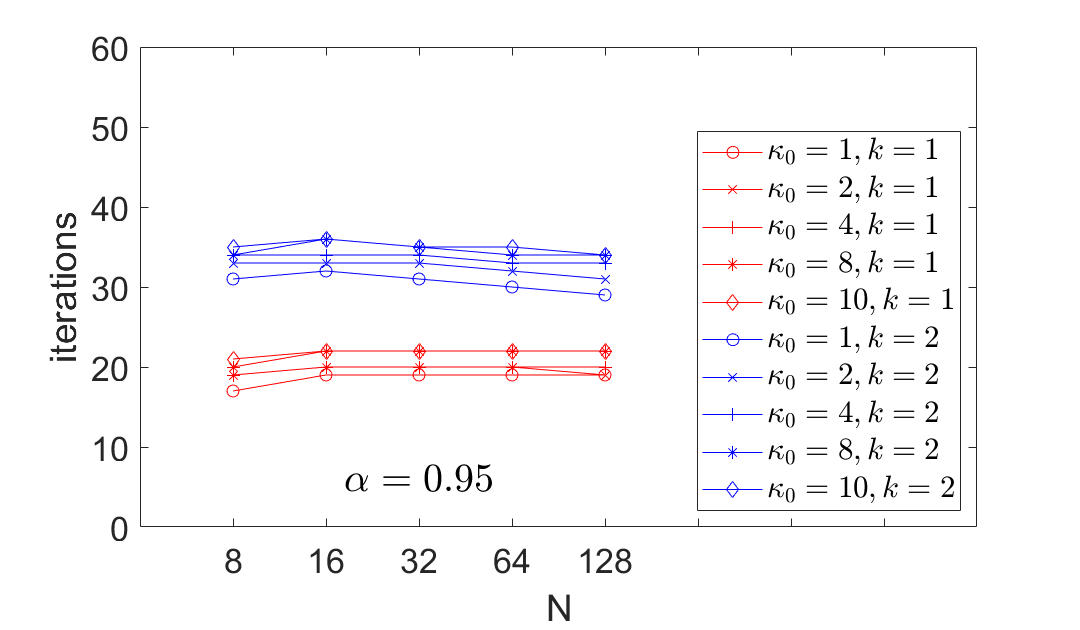} ~\hspace{-5mm}
		\includegraphics[width=.5\linewidth]{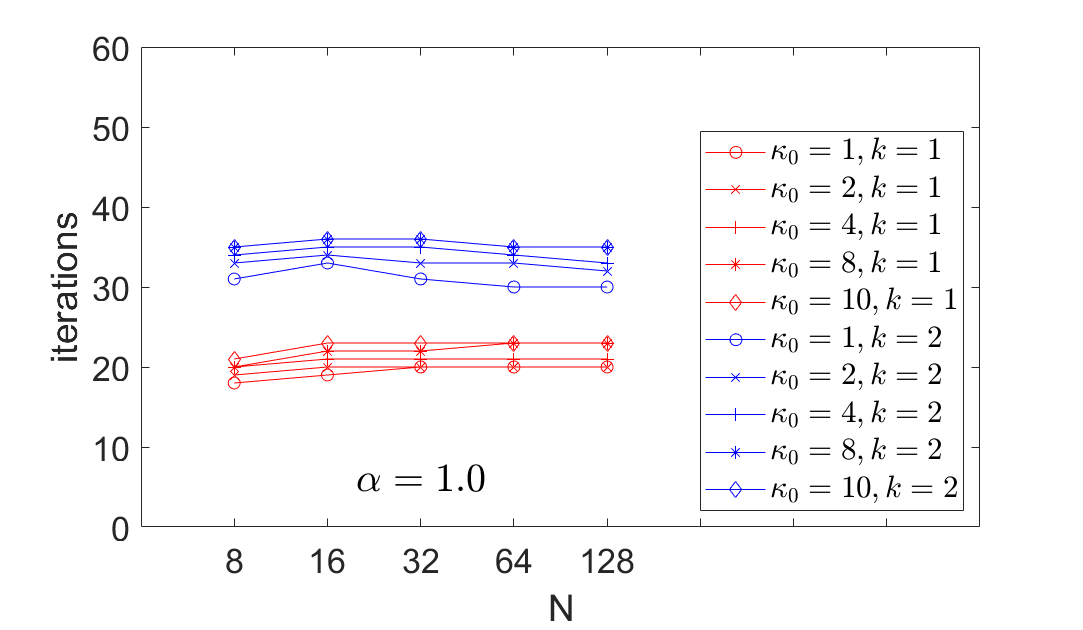} 
  	\end{subfigure}
  	\caption{Comparison of iteration numbers for different $\alpha$ and $\kappa_0$ values}
  	\label{fig:alpha-comparison}
\end{figure}
%

%{\color{black}{ $\alpha=1.95$:
%\input{code/results-08-11-2023/iteration_result_iop-eg-precond-exp-fd-08-07-2023.py-one95.tex}
%}
%}

\begin{comment}
{\color{black}{ 
%$\alpha=1.9$:
%\input{code/results-08-11-2023/iteration_result_iop-eg-precond-exp-fd-08-07-2023.py-one9.tex}
% EG precond test 
%% tensor [[k00 k01], [k01, k11]] = [[1.0,0],[0,1.0]]
%% method = one9
\begin{table}[h]
	\color{black}{
	\begin{center}
		\begin{tabular}{c|c||c c c c c} 
			\multicolumn{2}{c||}{ } & \multicolumn{5}{c}{$N$} \\ 
			$k$ &$\ \kappa_0$ &$8$ &$16$ &$32$ &$64$ &$128$ \\ 
			\hline 
			\multirow{5}{*}{$1$}&$1$ &$18$ &$19$ &$20$ &$20$ &$20$ \\ 
								&$2$ &$19$ &$20$ &$20$ &$20$ &$20$ \\ 
								&$4$ &$20$ &$21$ &$21$ &$21$ &$21$ \\ 
								&$8$ &$20$ &$22$ &$22$ &$23$ &$23$ \\ 
								&$10$&$21$ &$23$ &$23$ &$23$ &$23$ \\ 
			\cline{2-7} 
			\multirow{5}{*}{$2$}&$1$ &$31$ &$33$ &$31$ &$30$ &$30$ \\ 
								&$2$ &$33$ &$34$ &$33$ &$33$ &$32$ \\ 
								&$4$ &$34$ &$35$ &$35$ &$34$ &$33$ \\ 
								&$8$ &$35$ &$36$ &$36$ &$35$ &$35$ \\ 
								&$10$&$35$ &$36$ &$36$ &$35$ &$35$ \\ 
			\hline 
		\end{tabular} 
		\caption{Number of iterations for $\alpha = 0.9$} 
		\label{table:one9} 
	\end{center}
	} 
\end{table} 
}
}
\end{comment}

\tred{
In numerical experiments of this section, we consider 
\algn{ \label{eq:aniso-Poisson}
- \nabla \cdot (\kap \nabla u) = f \quad \text{ in } \Omega, \qquad u = 0 \quad \text{ on } \pd \Omega 
}
with a tensor 
\algn{ \label{eq:aniso-kappa}
	\kap = \pmat{\kappa_0 & 0 \\ 0 & 1 } 
}
where $\kappa_0>0$ is a constant. Let $\kap_{n} := \bs{\kap} n \cdot n$ on edge/face $e$ with a unit normal vector $n$ on $e$. Then, a modification of \eqref{eq:ah} for \eqref{eq:aniso-Poisson} is 
\algn{ \label{eq:ah-kappa}
	a_h (v, w) &= \LRp{ \kap \nabla v, \nabla w} - \LRp{ \LRa{ \avg{\kap \nabla v}, \jump{w}}_{\Ehi\cup\EhD} + \LRa{ \jump{v}, \avg{\kap \nabla w}}_{\Ehi\cup\EhD} } \\
	\notag &\quad + \LRa{ \gammakap \kap_{n} h_e^{-1-\alpha} \jump{v}, \jump{w} }_{\Ehi} + \LRa{ \gammakap \kap_{n} h_e^{-1 } \jump{v}, \jump{w} }_{\EhD} ,
}
and flux reconstruction formula is 
{\small
\algns{ % \label{eq:flux-reconstruction-kappa}
	(\zb_h, \bs{q})_T &= (- \kap \nabla \uh , \bs{q})_T, &  & \bs{q} \in \Pb_{k-2}(T), \\
	\LRa{ \zb_h \cdot \n, q }_e &= \LRa{- \avg{\kap \nabla \uh} \cdot \n + \gammakap \kap_{n}h_e^{-1-\alpha} \jump{\uh} \cdot \n , q }_e, & & q \in \mc{P}_{k-1}(e), e \in \Ehi , \\
	\LRa{ \zb_h \cdot \n, q }_e &= \LRa{u_N , q }_e, & & q \in \mc{P}_{k-1}(e), e \not\in \Ehi \cup \EhD, \\
\LRa{ \zb_h \cdot \n, q }_e &= \LRa{- {\kap\nabla \uh} \cdot \n + \gammakap \kap_{n} h_e^{-1} (\uh - u_D) , q }_e, & & q \in \mc{P}_{k-1}(e), e \in \EhD .
}
}
For simplicity of presentation we only showed a priori error analysis in Section~\ref{sec:apriori} for $\kappa_0=1$. The error analysis can be extended to \eqref{eq:aniso-Poisson} with $\kap$ weighted flux and penalization terms. In contrast, for the preconditioning discussed in Section~\ref{sec:preconditioning}, a constant which is related to the anisotropy of $\kap$, is involved in the operator preconditioning analysis. Thus, it does not seem to be straightforward to get an analytic proof that abstract preconditioners of the form \eqref{eq:block-diag} are spectrally equivalent to the operator given by \eqref{eq:ah-kappa} with equivalence constants independent of the anisotropy constant. Nonetheless, we present numerical results of convergence and preconditioners for anisotropic coefficients in this section because the numerical test results below show that IOP-EG methods with preconditioner  \eqref{eq:block-diag} are robust for the anisotropy of $\kap$ (see Tables~\ref{table:quadratic}-\ref{table:cubic}). As seen in Table~\ref{table:eg}, preconditioned EG methods show worse performances if $\kap$ is more anisotropic (i.e., $\kappa_0$ is larger), so this anisotropy robustness is another advantage of IOP-EG methods.
% because the numerical results show that IOP-EG methods are particularly more attractive than the EG methods for anisotropic coefficient problems.
}
%but the detailed proofs need an anisotropic version of the medius error analysis 
%, we considered the Poisson equation \eqref{eq:poisson-eq} with isotropic permeability coefficient for simplicity of analyses. 
%The error and preconditioning analyses in previous sections can be modified to 

\tred{
In the first set of numerical experiments we present convergence rates of errors of various versions of {\color{black}{IOP-EG}} methods with $\alpha = 1,2$, $\kappa_0 = 1,10$ in \eqref{eq:ah-kappa} and the manufactured solution $u = x(1-x)\sin(\pi y)$. Recall that a larger $\alpha$ implies a stronger interior over-penalization. From the results in Tables~\ref{table:quadratic-conv}--\ref{table:cubic-conv}, one can see that convergence rates are optimal for the $L^2$ error $\| u - u_h\|_0$, the $a_h$-norm $\| u - u_h \|_{a_h}$ endowed by \eqref{eq:ah-norm} with the bilinear form in \eqref{eq:ah-kappa}, the $\kap^{-1}$-weighted $L^2$ flux error $\|\bs{z} - \bs{z}_h\|_{\kap^{-1}}:=(\kap^{-1} (\bs{z} - \bs{z}_h), \bs{z} - \bs{z}_h)^{1/2}$. The errors of local mass reach the level of machine precision zero quickly as mesh is refined. As shown in the proof of Theorem~\ref{thm:flux-error}, exact local mass conservation theoretically holds for all meshes. However, the error $\| P_0(f - \div \bs{z}_h)\|_0$ is not machine precision zero for coarse meshes in our experiments because the errors of numerical quadrature of manufactured solution $u = x(1-x)\sin(\pi y)$ are involved in flux reconstruction}.

In the second set of experiments we present performance of preconditioners of the form \eqref{eq:block-diag-mat-inv} as a function of mesh refinement and anisotropy of the permeability tensor. 
We set $\kappa_0 = 1, 2, 4, 8, 10$ to test the proposed numerical methods with preconditioners for anisotropic tensors. In the results of experiments, $k$ is the polynomial degree of $\Vhc$, $\kappa_0$ is the coefficient in \eqref{eq:aniso-kappa} and $\gamma =10$. 

Numerical results for iterative solvers with $\alpha = 1,2$ are presented in Tables~\tred{4--5}.
More specifically, we present the number of iterations of a preconditioned MinRes method with the block diagonal preconditioner of the form \eqref{eq:block-diag-mat-inv}, and iteration stops when relative error becomes smaller than $10^{-12}$ of the initial error or \tred{when} the number of iterations is more than $10^4$. 
%\tred{We note that the wall clock solve time for $N=8$, $\kappa_0=1$ case is particularly longer than the ones for $N=8$, $\kappa_0=2,4,8,10$. We believe that this is because a preconditioner code compiled for $N=8$, $\kappa_0=1$ is reused for other cases. }

%%%%%%%%%%%%%%%%%%%%%%%%%%%%%%%%%%%%%%%%%%%%%%%%%%%%%%

We use the MinRes method instead of the conjugate gradient method for guaranteed convergence because the matrix is not symmetric positive definite in \tred{either} EG or {\color{black}{IOP-EG}} methods.
We use the hypre library (cf.~\cite{Falgout2002}) as an algebraic multigrid preconditioner for the blocks in \eqref{eq:block-diag-mat-inv}.
The corresponding preconditioner is constructed as in \eqref{eq:block-diag} with $a_h$ in \eqref{eq:ah-kappa}. 

%%%%%%%%%%%%%%%%%%%%%%%%%%%%%%%%%%%%%%%%%%%%%%%%%%%%%%

We also tested performance of preconditioners for the original EG method ($\alpha =0$) and \tred{two} {\color{black}{IOP-EG}} methods with $\alpha = 1, 2$. 
The results are given in Tables \ref{table:eg}--\ref{table:cubic}. 
In Table \ref{table:eg}, the number of iterations for the EG method clearly increases for mesh refinement in all cases.
Moreover, the number of iterations increases if $\kap$ is more anisotropic, and the increment is not negligible for the two cases $\kappa_0=1$ and $\kappa_0=10$ in the finest mesh $(N=128)$.
In contrast, the {\color{black}{IOP-EG}} methods with $\alpha = 1, 2$ \tred{perform} much better than the ones of the original EG method in all cases. 
As can be seen in Tables \ref{table:quadratic}--\ref{table:cubic}, the number of iterations is very robust for mesh refinement. 
For $\kap$ anisotropy, the number of iterations increases as $\kap$ becomes more anisotropic. 
Nevertheless, the {\color{black}{IOP-EG}} methods perform much better than the original EG methods in all cases.

Finally, we present two additional preconditioning experiment results. 
The purpose of the first additional experiment is to verify that the mesh-dependent over-penalization is necessary for robust preconditioning for mesh refinement. 
For this we set $\alpha =0$ and use 10 times stronger interior penalization parameter $\gamma$ on the interior edges/faces than the penalization parameter on the boundary edges/faces. 
Specifically, we set $\gamma$ on the interior edges/faces as $\gamma_{\text{int}}=100$ and set $\gamma$ on the boundary edges/faces as $\gamma_{\pd} =10$.
In Table~\ref{table:large} one can see that the number of iterations is smaller than the ones in Table~\ref{table:eg}, so the interior over-penalization with a constant factor clearly improves preconditioning performances. However, the results in Table~\ref{table:large} also show that the number of iterations still considerably increases for mesh refinement. 
The purpose of the second additional experiment is to obtain numerical evidence \tred{of how large $\alpha >0$ need to be for robust preconditioning while our analysis showed that $\alpha \ge 1$ is sufficient if $\kappa_0=1$. For this, we ran preconditioning experiments with different $\alpha$ values and the results are given in Figure~\ref{fig:alpha-comparison}. The results show that the number of iterations increases if $\alpha < 0.9$ while for mesh refinement and anisotropy of $\kap$, the iteration numbers seem to be nearly stable for mesh refinement and anisotropy if $\alpha\ge 0.9$. The numerical results show that while $\alpha = 1$ may not be a sharp threshold value for preconditioning robustness we can observe that $\alpha$ should be sufficiently close to 1. }

%that the condition $\alpha \ge 1$ is necessary for robust preconditioning. To see it, we run preconditioning experiments with \tred{different $\alpha$ values.} %$\alpha = 1/2$. 
%One can see in Table~\ref{table:onehalf} that the number of iterations increases for mesh refinement even for the isotropic $\kap$.
%These numerical results indirectly support the result that the assumption $\alpha \ge 1$ is necessary for our spectral equivalence analysis.

\subsubsection*{Acknowledgement}

Omar Ghattas gratefully acknowledge support by Department of Energy, Office of Advanced Scientific Computing Research (award number DE-SC0019303).
Jeonghun J. Lee is partially supported by University Research Committee grant of Baylor University and by the National Science Foundation under grant number DMS-2110781.

\section{Conclusion}
\label{sec:conclusion}
\tred{
In this paper we propose interior over-penalized enriched Galerkin (IOP-EG) methods for second order elliptic equations. %IOP-EG and EG methods have the same number of degrees of freedom but IOP-EG methods have over-penalized interior jump terms. %We proved optimal convergence of errors for the IOP-EG methods, and constructed robust iterative solvers for IOP-EG methods. 
From theoretical point of view, a new medius error analysis and a new spectral equivalence analysis, are developed for optimal convergence of errors and for construction of robust iterative solvers. 
In numerical experiment results comparing preconditioned IOP-EG and EG methods, we observe that preconditioned IOP-EG methods show very robust iterative solver performances for mesh refinement and for the anisotropy of permeability coefficients. In conclusion, the IOP-EG methods can be a good replacement of the original EG methods providing parameter-robust scalable iterative solvers.
%illustrating the error and preconditioner analyses are included. By numerical comparison of preconditioned solvers of IOP-EG and EG methods, we observe that preconditioned IOP-EG methods show better performances. %are more robust for mesh refinement and the anisotropy of coefficients.
%The numerical results show that preconditioned IOP-EG methods show iterative solver performances better than . 
}

%Adopting a medius error analysis approach of discontinuous Galerkin methods, we prove optimal a priori error estimates for a model Poisson equation. 
%We also propose abstract form of preconditioners for the new EG methods and efficiency of preconditioners was proved by analyzing spectral equivalence of operators. 

%the interior over-penalization is advantageous for constructing iterative solvers for the problem. 
%The error estimate results and performance of iterative solvers are illustrated by numerical experiments.

%numerical methods for second order elliptic equations by over-penalizing interior jumps of enriched Galerkin methods. 
%We carried out error analysis with minimal regularity assumption of exact solutions, and proved optimal convergence of numerical solutions.
%In addition, we showed that the interior over-penalization is advantageous for constructing iterative solvers for the problem. 
%The error estimate results and performance of iterative solvers are illustrated by numerical experiments.

\bibliographystyle{elsarticle-num}

\end{document}